\setlist[itemize]{leftmargin=*} 
\setlist[enumerate]{leftmargin=*}
\theoremstyle{plain}
\newtheorem{theorem}{Theorem}[section]
\newtheorem{claim}[theorem]{Claim}
\newtheorem{lemma}[theorem]{Lemma}
\newtheorem{problem}[theorem]{Problem}
\theoremstyle{definition}
\newtheorem{defn}[theorem]{Definition}
\newtheorem*{defn*}{Definition}
\newtheorem*{lem*}{Lemma}
\def\expandafter\normalsize\expandafter{%
    \normalsize
    \setlength\abovedisplayskip{4pt}
    \setlength\belowdisplayskip{4pt}
    \setlength\abovedisplayshortskip{4pt}
    \setlength\belowdisplayshortskip{4pt}
}
\def\abhi#1{}
\def\domagoj#1{}
\let\abhi=\abhiOpt 
\let\domagoj=\domagojOpt 
\newcommand{\calP}{\mathcal{P}}
\def\eps {\varepsilon}
\DeclareMathOperator{\E}{\mathbb{E}}
\newcommand{\ex}{\mathrm{ex}}
\title{The extremal number of cycles with all diagonals}
\author{Domagoj Brada\v{c}\thanks{Department of Mathematics, ETH, Z\"urich, Switzerland. Research supported in part by SNSF grant 200021\_196965. Emails:~\textbf{\{domagoj.bradac, abhishek.methuku, benjamin.sudakov\}@math.ethz.ch}.} \and Abhishek Methuku\footnotemark[1] \and Benny Sudakov\footnotemark[1]}
\date{}
\begin{document}

\maketitle

\begin{abstract}
In 1975, Erd\H{o}s asked the following natural question: What is the maximum number of edges that an $n$-vertex graph can have without containing a cycle with all diagonals? Erd\H{o}s observed that the upper bound $O(n^{5/3})$ holds since the complete bipartite graph $K_{3,3}$ can be viewed as a cycle of length six with all diagonals.

In this paper, we resolve this old problem. We prove that there exists a constant $C$ such that every $n$-vertex with $Cn^{3/2}$ edges contains a cycle with all diagonals. Since any cycle with all diagonals contains cycles of length four, this bound is best possible using well-known constructions of graphs without a four-cycle based on finite geometry. 
Among other ideas, our proof involves a novel lemma about finding an \emph{almost-spanning} robust expander which might be of independent interest. 

\end{abstract}

\section{Introduction}

One of the most central topics in extremal combinatorics, the Tur\'an problem, asks how many edges are needed in an $n$-vertex graph to guarantee the existence of a certain substructure.
In this paper we resolve the problem of finding a cycle with all diagonals and develop a novel method which may have other applications.

The study of extremal problems for cycles goes back to the influential result of Mantel~\cite{mantel1907vraagstuk} from 1907 on triangle-free graphs, and the work of Erd\H{o}s~\cite{erdos1944problem} connecting Sidon sets with the extremal problem for four-cycles. Since then Tur\'an-type problems have been extensively studied, which lead to the development of many tools and methods, with applications in many other fields such as discrete geometry and information theory. Formally, the \emph{extremal number} of a graph $F$, introduced by Tur\'an in 1941 and denoted by $\ex(n,F)$, is the maximum possible number of edges in an $n$-vertex graph not containing $F$ as a subgraph. The celebrated Erd\H{o}s–Stone–Simonovits theorem~\cite{stone1946structure, erdos1966limit} states $\ex(n, F) = (1 - \frac{1}{\chi(F)-1} + o(1)) \binom{n}{2}.$ This result implies that if $F$ is not bipartite, $\ex(n, F) = \Theta(n^2)$. However, if $F$ is bipartite, it only shows that $\ex(n, F) = o(n^2)$. In fact, there are still only a few bipartite graphs $F$ for which the order of magnitude of $\ex(n, F)$ is known. A fundamental result of Bondy and Simonovits~\cite{bondy1974cycles} shows that $\ex(n,C_{2 \ell}) = O(n^{1+1/\ell})$, where $C_{2 \ell}$ denotes a cycle of length $2 \ell$. Matching lower bound constructions are only known for cycles of length four, six and ten~\cite{brown1966graphs, wenger1991extremal, erdHos1966problem}. In particular, it is known that $\ex(n,C_{4}) = \Theta(n^{3/2})$. For the complete bipartite graph $K_{s,t}$, a well-known result of K\H{o}v\'{a}ri, S\'{o}s and Tur\'{a}n~\cite{kHovari1954problem} from 1954 shows the upper bound $\ex(n,K_{s,t}) = O(n^{2-1/s})$ holds for any $t \ge s$, and matching lower bound constructions are known when $s=t=2,3$  and when $t$ is large enough compared to $s$, see for example~\cite{kollar1996norm,alon1999norm,bukh2021extremal}. Even the extremal numbers of simple graphs such as the cube and $K_{4,4}$ remain elusive.  

Questions about finding cycles in graphs with given properties have been extensively studied, see for instance~\cite{corradi1963maximal, egawa1996vertex, verstraete2000arithmetic,liu2023solution}, and the nice survey~\cite{verstraete2016extremal}. In particular, there has been a lot of research on problems concerning finding cycles with chords. In 1975, Erd\H{o}s~\cite{erdHos1975problems} conjectured that there is a constant $c$ such that any $n$-vertex graph with $cn$ edges contains two cycles $C_1, C_2$ where the edges of $C_{2}$ are chords of the cycle $C_1$. Bollob\'as~\cite{MR539938} later proved this conjecture. Extending his result, Chen, Erd\H{o}s and Staton~\cite{chen1996proof} proved that there is a constant $c_k$ such that any $n$-vertex graph with $c_k n$ edges contains $k$ cycles $C_1,\ldots, C_k$, such that the edges of $C_{i+1}$ are chords of the cycle $C_i$. Fern{\'a}ndez, Kim, Kim and Liu~\cite{fernandez2022nested} strengthened the aforementioned result of Bollob\'as by finding cycles $C_1, C_2$ where the edges of $C_{2}$ do not cross each other and form chords of the cycle $C_1$. Recently, improving an old result of Chen, Erd\H{o}s and Staton~\cite{chen1996proof}, Dragani\'c, Methuku, Munh\'a Correia and Sudakov~\cite{draganic2023cycles} showed that any $n$-vertex graph with $\Omega(n \log^8 n)$ edges contains a cycle with at least as many chords as it has vertices. 

In this paper we are interested in finding a cycle containing all chords joining vertices at maximum distance on the cycle. Such chords are called \emph{diagonals}. Since the extremal number of any non-bipartite graph is $\Theta(n^2)$, it is more interesting to find an even cycle with all diagonals. Formally, a cycle of length $2 \ell$ with all diagonals, denoted $C^{\textrm{dia}}_{2 \ell}$, is a graph with the vertex set $V(C^{\textrm{dia}}_{2 \ell}) \coloneqq \{x_1, \dots, x_{2 \ell}\}$ and the edge set $E(C^{\textrm{dia}}_{2 \ell}) \coloneqq E(C_{2 \ell}) \cup \{x_i x_{i+ \ell} \mid i = 1, \dots, \ell \}$ (see Figure~\ref{fig:c10}).

\begin{figure}[h]
    \centering
\includegraphics{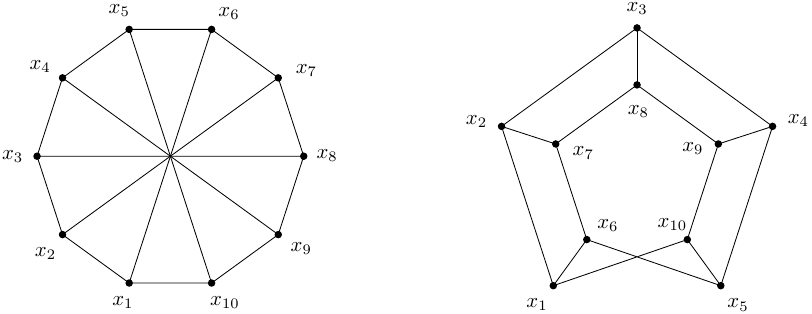}
    \caption{Two different ways of drawing the graph $C^{\textrm{dia}}_{10}$.}
    \label{fig:c10}
\end{figure}

In 1975, Erd\H{o}s~\cite{erdos1975some} asked the following natural extremal question: What is the maximum number of edges that an $n$-vertex graph can have if does not contain a cycle with all diagonals? In 1995, this problem was reiterated in a well-known paper of Pyber, R\"{o}dl, and Szemer\'{e}di~\cite{pyber1995dense} on the Erd\H{o}s-Sauer problem~\cite{erdos1975some, janzer2022resolution}. Erd\H{o}s~\cite{erdos1975some} observed that the upper bound $\ex(n, K_{3,3}) = O(n^{5/3})$ holds since the complete bipartite graph $K_{3,3}$ can be viewed as the cycle of length six with all diagonals.  In this paper, we resolve this old problem. 

\begin{theorem} \label{thm:main}
    There exists a constant $C > 0$ such that every $n$-vertex graph with at least $Cn^{3/2}$ edges contains a cycle with all diagonals. 
\end{theorem}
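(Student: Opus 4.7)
The plan is to prove the theorem in three stages. First, given $G$ with at least $Cn^{3/2}$ edges, iterated deletion of low-degree vertices lets us pass to a subgraph $G_1$ of minimum degree $\Omega(n^{1/2})$, and then the Koml\'os--Szemer\'edi sublinear expander framework yields a sublinear expander $G_2 \subseteq G_1$ of comparable density. This opening move has become standard in bipartite Tur\'an problems.

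Second, I would invoke the novel lemma announced in the abstract to extract from $G_2$ an \emph{almost-spanning robust expander} $G_3$ on $(1-o(1))|V(G_2)|$ vertices, so that every sufficiently large $S \subseteq V(G_3)$ has a large neighborhood even after the deletion of a small ``bad'' set. The payoff is a strong \emph{linkage} property: given any bounded collection of vertex pairs, one can simultaneously join them by internally disjoint short paths, avoiding a prescribed small forbidden set.

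Third, I would realize $C_{2\ell}^{\textrm{dia}}$ inside $G_3$ for a suitable $\ell$. A natural approach is to extract a large matching $M = \{a_i b_i\}$ in $G_3$ and to consider the auxiliary \emph{pairing graph} $F$ on $M$ whose edges record, under a chosen orientation of each rung, when both the $a$-ends and the $b$-ends of two rungs are adjacent in $G_3$. A copy of $C_{2\ell}^{\textrm{dia}}$ then corresponds to a cycle of length $\ell$ in $F$ with $\ell-1$ ``straight'' edges and one ``crossing'' edge (the M\"obius twist that distinguishes $C_{2\ell}^{\textrm{dia}}$ from a prism). I would attempt to build such a cycle by iteratively extending a partial ladder via a K\H{o}v\'ari--S\'os--Tur\'an-style density count on endpoints of $M$ inside $G_3$, and then close the ladder using the robust linkage property on the two remaining pairs of endpoints, adjusting rung orientations to secure the correct parity.

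The main obstacle is that $C_{2\ell}^{\textrm{dia}}$ requires \emph{single edges} between consecutive rungs, not paths: sublinear expansion alone typically delivers only topological copies, but subdividing any cycle edge of $C_{2\ell}^{\textrm{dia}}$ destroys the diagonal pairing, so the usual trick of passing to topological minors is unavailable. The interplay needed to force single edges while keeping the construction almost-spanning -- so that the matching and the linkage paths do not collide -- is precisely where the new almost-spanning robust expander lemma should prove essential, and producing the M\"obius twist rather than just a prism is the final parity subtlety that closes the argument.
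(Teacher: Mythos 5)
Your proposal correctly identifies the two headline ingredients — an almost-spanning robust expander lemma, and the parity/M\"obius subtlety that distinguishes $C^{\textrm{dia}}_{2\ell}$ from a prism — but the proposal misplaces the expander lemma and, as a result, the auxiliary-graph step does not work.

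The crucial point you miss is \emph{where} the almost-spanning expander lives. You extract the almost-spanning robust expander $G_3$ as a subgraph of the original graph (your $G_2$). In the paper the almost-spanning expander is instead found inside the \emph{$C_4$-graph} $\Gamma_0$ of $G$: the auxiliary graph on $V(\Gamma_0) = E(G)$ in which two edges are adjacent when they are opposite edges of a four-cycle. Getting $G$ itself to be a strong edge-expander is the easy preparatory step (the paper does this via $1/2$-maximal subgraphs, Lemma~\ref{lem:expandersubgraph}; your Koml\'os--Szemer\'edi sublinear expander step gives a qualitatively weaker $1/\log^2 n$-type expansion, which would not suffice for the later counting, but that is a secondary issue). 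The genuine difficulty is that $\Gamma_0$, not $G$, is where odd cycles must be found, and a naive expander subgraph of $\Gamma_0$ could be tiny or even bipartite, destroying all hope of an odd cycle. That is precisely what Lemma~\ref{lem:almost_spanning_expander} overcomes by showing $\Gamma_0$ has an \emph{almost-spanning} robust expander, so its independence number stays below $v(\Gamma)/2C$. An expander in $G$ does not address this at all.

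Your ``pairing graph'' $F$ on a matching $M$ is in spirit a restriction of $\Gamma_0$, but the restriction to a matching discards nearly all the structure. $\Gamma_0$ has $N_0 = Cn^{3/2}$ vertices and, by supersaturation, $\Omega(C^4 n^2) = \Omega(N_0^{4/3})$ edges. Your $F$ has at most $n/2$ vertices, and an edge of $F$ requires a four-cycle whose two opposite edges \emph{both} lie in $M$; there is no reason such four-cycles are plentiful, and for natural $C_4$-extremal-type host graphs they need not be. So $F$ may have far too few edges (and, worse, a large independent set), and your plan to find an odd cycle in $F$ by iterative ladder extension has no density to work with. The paper avoids this by keeping all of $E(G)$ as vertices of the auxiliary graph, so that supersaturation gives local density (Claim~\ref{claim:c4-s}), and then handles the collision/disjointness issues via ``proper'' paths, thick/thin four-cycles, and nice fans (Lemma~\ref{lem:fan}, Lemma~\ref{lem:even-odd}) rather than by restricting to a matching. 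Your final closing step (robust linkage plus parity adjustment) is conceptually aligned with the paper's odd/even fan argument, but it cannot be carried out on $F$ as described.
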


Observe that every cycle with all diagonals contains cycles of length four. Hence, by the aforementioned lower bound $\ex(n, C_4) = \Omega(n^{3/2})$, the bound in Theorem~\ref{thm:main} is tight up to a constant factor. 

Note that $C^{\textrm{dia}}_{2 \ell}$ is bipartite if and only if $\ell$ is odd. Hence, in this paper, we focus on finding a cycle $C^{\textrm{dia}}_{2 \ell}$ where $\ell = 2k+1$ for some integer $k \ge 1$. It is convenient to view the graph $C^{\textrm{dia}}_{2 \ell} = C^{\textrm{dia}}_{4k+2}$ as an ``odd cycle of four-cycles" of length $2k+1$. See the drawing on the right in Figure~\ref{fig:c10} for an example (and see Claim~\ref{claim:cycle} for a formal argument). Note that one of the four-cycles in the figure contains a ``twist'', otherwise the resulting graph is not bipartite.  

\textbf{Related results.} Let us now mention some closely related results,  and briefly explain how our approach differs from the existing methods and the difficulties we encounter.

The \emph{$\ell$-prism} $C_{\ell}^{\square}:=C_{\ell}\square K_{2}$ is the Cartesian product of the cycle $C_{\ell}$ of length $\ell$ and an edge. In other words, $C_{\ell}^{\square}$ consists of two vertex disjoint $\ell$-cycles and a matching joining the corresponding vertices on these two cycles. For instance, $C_{4}^{\square}$ is the notorious cube graph, and the best known upper bound is $\ex(n, C_{4}^{\square}) = O(n^{8/5})$~\cite{pinchasi2005graphs}. Observe that the \emph{$\ell$-prism} $C_{\ell}^{\square}$ and $C^{\textrm{dia}}_{2 \ell}$ are both ``cycles of four-cycles" on $2 \ell$ vertices but the crucial difference is that in $C^{\textrm{dia}}_{2 \ell}$ one of the four-cycles contains a twist, while in $C_{\ell}^{\square}$ none of the four-cycles have a twist. This is the reason why, if $\ell$ is odd, $C^{\textrm{dia}}_{2 \ell}$ is bipartite but $C_{\ell}^{\square}$ is not bipartite. He, Li and Feng~\cite{he2023extremal} studied the odd prisms and determined $\ex(n, C_{2k+1}^{\square})$ for $k \ge 1$ and large $n$. In that case $\ex(n, C_{2k+1}^{\square}) = \Theta(n^2)$ since $C_{2k+1}^{\square}$ is not bipartite. Gao, Janzer, Liu and Xu~\cite{gao2023extremal}  studied the even prisms and showed that $\ex(n, C_{2\ell}^{\square}) = \Theta(n^{3/2})$ for every $\ell \ge 4$. Let us compare the problem of finding $C_{2\ell}^{\square}$ with that of finding $C^{\textrm{dia}}_{2 \ell}$. As discussed above, $C_{2\ell}^{\square}$ corresponds to an ``even cycle of four-cycles" but $C^{\textrm{dia}}_{2 \ell}$ corresponds to an ``odd cycle of four-cycles". Since forcing an odd cycle in a graph is significantly more difficult than forcing an even cycle (because the extremal number of an odd cycle is $\ex(n, C_{2k+1}) = \Omega(n^2)$), we need a new approach for finding $C^{\textrm{dia}}_{2 \ell}$, which we discuss in detail in Section~\ref{sec:overview}.

Next, let us mention a problem about finding (topological) cycles in pure simplicial complexes with an interesting parallel to the problem of finding $C^{\textrm{dia}}_{2 \ell}$ and $C_{2\ell}^{\square}$. A $3$-uniform hypergraph $\mathcal H$ corresponds to the $2$-dimensional pure simplicial complex obtained by taking the downset generated by the set of edges of $\mathcal H$. A tight cycle of even length is homeomorphic to the cylinder $S^1 \times B^1$, while a tight cycle of odd length is homeomorphic to the M\"{o}bius strip (see Figure 1 in~\cite{tomon2022robust}). 
In~\cite{kupavskii2022extremal}, Kupavskii, Polyanskii, Tomon, and Zakharov proved that every $3$-uniform hypergraph with $n$ vertices and at least $n^{2+\alpha}$ edges contains a triangulation of the cylinder. However, their proof does not work for finding a triangulation of the M\"{o}bius strip. This is because they reduce this problem to a problem about finding rainbow cycles in certain properly edge colored graphs where an even cycle corresponds to a cylinder, but an odd cycle corresponds to a M\"{o}bius strip. As the extremal number of odd cycles is $\Omega(n^2)$, their method completely breaks down for finding a triangulation of the M\"{o}bius strip. Note that, interestingly, a triangulation of the M\"{o}bius strip is analogous to a cycle with all diagonals (as they both contain a twist) and a triangulation of the cylinder is analogous to a prism $C_{2\ell}^{\square}$. 
Overcoming this issue, Tomon~\cite{tomon2022robust} proved that every $3$-uniform hypergraph with $n$ vertices and at least $n^{2+\alpha}$ edges contains a triangulation of the M\"{o}bius strip by passing to a subhypergraph which is an expander (where subsets of $1$-dimensional faces expand via $2$-dimensional faces). Along these lines, a natural approach for finding a cycle with all diagonals in a graph $G$ would be to pass to an expander where four-cycles expand via edges. However, this approach immediately fails for our problem because it is completely possible that such an expander is bipartite in our case, hence it is impossible to find an odd cycle in it. This issue is explained in more detail in Section~\ref{sec:overview}. To overcome this fundamental obstacle, we develop a novel lemma that finds an almost-spanning expander.

Let us now discuss the extremal numbers of two more natural graphs containing four-cycles: the \emph{grid} and the \emph{blow-up of a cycle}. For a positive integer $t$, the grid $F_{t,t}:=P_{t}\square P_{t}$ is the Cartesian product of two vertex-disjoint paths $P_{t}$. Since $F_{t,t}$ contains four-cycles, we have 
$\ex(n, F_{t,t}) = \Omega(n^{3/2})$. Brada\v{c}, Janzer, Sudakov and Tomon~\cite{bradavc2023turan} showed that $\ex(n, F_{t,t}) = O(n^{3/2})$. Gao, Janzer, Liu and Xu~\cite{gao2023extremal} later gave a very simple proof of this bound. Roughly speaking, it is easier to find the grid compared to the graphs $C^{\textrm{dia}}_{2 \ell}$ or $C_{2\ell}^{\square}$ because the four-cycles in the grid are not arranged in a cyclic manner. Finally, let us consider blow-ups of cycles. The $2$-blowup of a cycle is the graph obtained by replacing each vertex of the cycle with an independent set of size $2$ and each edge of the cycle by a four-cycle. Jiang and Newman~\cite{jiang2017small} asked the following question. What is the maximum number of edges that an $n$-vertex graph can have without containing the $2$-blowup of a cycle? Janzer~\cite{janzer2023rainbow} proved that an upper bound of $O(n^{3/2}(\log n)^{7/2})$ holds, and it is known that a lower bound of $\Omega(n^{3/2})$ holds. It is an interesting question to decide whether the logarithmic factor in the upper bound can be removed.

\textbf{Notation.} For a set $S$, we denote its complement by $\overline{S}$. For a graph $G$, let $v(G)$ denote the number of vertices of $G$, let $e(G)$ denote the number of edges of $G$ and let $e_G(X, Y)$ denote the number of edges of $G$ with one endpoint in $X$ and another endpoint in $Y$. For a set $S \subseteq V(G)$, let $e_G(S)$ denote the number of edges of $G$ spanned by $S$, and let $N_{G}(S) \coloneqq \{y \not \in S \mid xy \in E(G) \text{ for some } x \in S\}$ be the neighbourhood of $S$ in $G$. Sometimes we write $N(S)$ and $e(S)$ instead of $N_{G}(S)$ and $e_G(S)$ respectively if the graph $G$ is clear from context.  For any two distinct vertices $u, v \in V(G)$, let $d_G(u, v)$ denote the size of the common neighbourhood of $u$ and $v$ in $G$. Let $\delta(G)$ and $\Delta(G)$ denote the minimum degree and maximum degree of $G$ respectively. For a set of vertices $V' \subseteq V(G)$, let $G \setminus V'$ denote the graph obtained by removing the vertices of $V'$ from $G$, and for a set of edges $E' \subseteq E(G)$, let $G - E'$ denote the graph obtained by removing the edges in $E'$ from $G$. 

\textbf{Organisation of the paper.} In Section~\ref{sec:overview}, we give a detailed overview of the proof of Theorem~\ref{thm:main}. In Section~\ref{sec:expansion}, we introduce the notion of edge-expansion we use and prove a preliminary lemma. In Section~\ref{sec:proof}, we prove Theorem~\ref{thm:main} which is split into three well-defined subsections. Our main lemma about finding an almost-spanning expander is proved in Section~\ref{subsec:mainlemma}. Finally, we give some concluding remarks and open problems in Section~\ref{sec:concluding}.

\section{Overview of the proof}
\label{sec:overview}
Even though our proof is relatively short, in this section we give a detailed overview of the proof of Theorem~\ref{thm:main} highlighting certain novel aspects of the proof.  Suppose $G$ is an $n$-vertex graph with $Cn^{3/2}$ edges where $C$ is a large enough constant, and we may assume $G$ is bipartite using standard arguments. Our aim is to show that $G$ contains a cycle with all diagonals. 

As noted in the introduction, since a cycle of length $2 \ell$ with all diagonals, $C^{\textrm{dia}}_{2 \ell}$, is bipartite if and only if $\ell$ is odd, we assume $\ell = 2k+1$ for some integer $k \ge 1$, and we view the graph $C^{\textrm{dia}}_{2 \ell} = C^{\textrm{dia}}_{4k+2}$ as an ``odd cycle of four-cycles'' of length $2k+1$ as shown in Figure~\ref{fig:c10} (see Claim~\ref{claim:cycle} for a formal argument).

This suggests the following natural strategy. Consider an auxiliary graph $\Gamma_0$ whose vertex set is the set of edges of $G$ and $xy, uv \in E(G)$ are adjacent in $\Gamma_0$ if $xyuv$ is a four-cycle in $G$. We call $\Gamma_0$ the \emph{$C_4$-graph} of $G$. By a standard supersaturation result, $G$ has at least $\Omega(C^4 n^2)$ four-cycles. Hence, 
$\Gamma_0$ has $N = Cn^{3/2}$ vertices and at least $\Omega(C^4 n^2) = \Omega(N^{4/3})$ edges. Observe that a cycle of length $2k+1$ in $\Gamma_0$ corresponds to a copy of $C^{\textrm{dia}}_{4k+2}$ in $G$ if the vertices of the cycle correspond to pairwise disjoint edges in $G$; otherwise, we obtain a \emph{degenerate} copy of $C^{\textrm{dia}}_{4k+2}$ in $G$. Thus, our goal is to embed an \emph{odd} cycle in $\Gamma_0$ which corresponds to a non-degenerate copy of $C^{\textrm{dia}}_{4k+2}$ in $G$; call such an odd cycle in $\Gamma_0$, \emph{proper}. To find such an embedding, it is very helpful if we can find paths connecting pairs of vertices in $\Gamma_0$ in a robust manner i.e., avoiding a given set of already embedded vertices in $G$.  Using expansion is a standard way of achieving this, so it is natural to take a subgraph $\Gamma \subseteq \Gamma_0$ which is an expander, and try to embed an odd cycle in $\Gamma$.   

We, however, encounter a fundamental difficulty with the above strategy. Note that since the extremal number of the four-cycle is less than $n^{3/2} = N / C = v(\Gamma_0)/C,$ there is no independent set of size $N / C$ in $\Gamma_0$ i.e., $\Gamma_0$ is far from being bipartite in some sense. So there are odd cycles in $\Gamma_0$ but when we pass to the expander subgraph $\Gamma \subseteq \Gamma_0$, it is quite possible that $v(\Gamma)$ is much smaller than $v(\Gamma_0)$ and $\Gamma$ can even be bipartite, in which case, there is no hope for finding an odd cycle in $\Gamma$ (let alone, a proper odd cycle).

To overcome the above difficulty, we prove the following lemma (Lemma~\ref{lem:almostspanninginformal}). We believe this lemma is of independent interest and similar ideas may have other applications (see Lemma~\ref{lem:almost_spanning_expander}
for its formal statement). Note that since every graph contains an expander, 
without loss of generality, we may assume the host graph $G$ is an expander. In our proof we actually need that $G$ has very strong edge expansion; for this, we adopt the notion of $\alpha$-maximality from Tomon~\cite{tomon2022robust} (see Definition~\ref{def:alphamaximal}). Moreover, we also need that the maximum degree of $G$ is close to its average degree and to guarantee this, we remove a carefully chosen set of vertices from a $1/2$-maximal subgraph: this affects the expansion of very small sets but all large enough sets (of size up to $v(G)/2$) in $G$ still expand extremely well (see Lemma~\ref{lem:expandersubgraph}).

\begin{lemma}\label{lem:almostspanninginformal}
If $G$ is an edge-expander, then its $C_4$-graph $\Gamma_0$ contains an almost-spanning (robust) expander $\Gamma$.    
\end{lemma}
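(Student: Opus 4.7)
The plan is to obtain $\Gamma$ from $\Gamma_0$ by an iterative pruning procedure. Starting with $\Gamma := \Gamma_0$, while some subset $S \subseteq V(\Gamma)$ of size at most $v(\Gamma)/2$ fails the desired expansion bound (say $|N_{\Gamma}(S) \setminus S| < |S|/\tau$ for a suitable polylogarithmic threshold $\tau$), delete $S$ and continue. When the procedure halts the surviving subgraph is by construction a robust expander, so the task reduces to showing that the accumulated deletions remove only an $o(1)$ fraction of $V(\Gamma_0) = E(G)$.

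The heart of the argument is to show that any "bad" set $S$ arising in the pruning must be small, using the hypotheses on $G$. If $S$ has low $\Gamma_0$-expansion, then almost all four-cycles of $G$ that touch $S$ have both opposite edges inside $S$, so they are concentrated in the induced subgraph $G[V(S)]$ where $V(S) \subseteq V(G)$ is the set of $G$-endpoints of edges in $S$. On one hand, the $\alpha$-maximality / near-regularity guaranteed by \Cref{lem:expandersubgraph} upper-bounds the number of four-cycles that can live inside any vertex set of $G$ of a given size. On the other hand, the strong edge-expansion of $G$ produces many $G$-edges leaving $V(S)$, and near-regular codegrees then convert these into many four-cycles of $G$ straddling $S$ — i.e., many $\Gamma_0$-edges out of $S$. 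Balancing the two bounds forces $|S|$ to be a small fraction of $V(\Gamma_0)$ at every step.

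To bound the cumulative deletion across steps, one then introduces a global potential — for instance $e(\Gamma) - c \cdot v(\Gamma)$ for a suitably chosen $c$, or equivalently the number of surviving four-cycles in $G$. Each pruning step destroys at least of order $\tau|S|$ edges of $\Gamma$ (the internal four-cycles of $S$ plus its small boundary), while $\Gamma_0$ starts with of order $C^{8/3} v(\Gamma_0)^{4/3}$ edges by four-cycle supersaturation. With the parameters calibrated so that this initial budget dominates the per-step loss, the total pruned mass adds up to $o(v(\Gamma_0))$, which is the almost-spanning conclusion.

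The main obstacle I anticipate is precisely the translation from edge-expansion of $G$ to four-cycle expansion of $\Gamma_0$: a $G$-edge leaving $V(S)$ only yields an element of $N_{\Gamma_0}(S) \setminus S$ if it can be completed to a four-cycle whose opposite edge lies in $S$ (and moreover that completing pair of edges must itself sit in $G$), and this extension step depends delicately on codegree control. This is where the near-regularity of the reduced host graph does essential work, and is also why the paper first invests in the $\alpha$-maximality reduction from \cite{tomon2022robust} before attacking the $C_4$-graph. A plausible fallback, if pointwise codegree estimates are not strong enough, is to pre-sparsify $E(G)$ randomly to regularize codegrees before forming $\Gamma_0$, at the cost of a constant factor in the edge count.
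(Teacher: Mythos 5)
Your high-level plan---prune non-expanding sets from $\Gamma_0$ and use the edge-expansion of $G$ to bound what is pruned---is in the right spirit, but the step translating $G$-expansion into $\Gamma_0$-expansion has a genuine gap, and the missing idea is precisely the paper's key device.

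You route the argument through $V(S) \subseteq V(G)$, the set of $G$-endpoints of edges in $S$: bound four-cycles inside $G[V(S)]$ via $\alpha$-maximality, and turn $G$-edges leaving $V(S)$ into $\Gamma_0$-edges leaving $S$ via codegree control. Both legs fail. If $S$ is a constant fraction of $E(G)$ then $V(S)$ can be essentially all of $V(G)$, so $\alpha$-maximality yields no nontrivial cap on the number of four-cycles inside $G[V(S)]$ and edge-expansion of $G$ yields no useful supply of edges leaving $V(S)$. And, as you flag yourself, a $G$-edge with one endpoint outside $V(S)$ does not in general complete to a four-cycle whose opposite pair lies in $S$; codegree bounds let you complete it to \emph{some} four-cycle, not a straddling one, and the random sparsification fallback does not repair this. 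The paper works with a different vertex set entirely. Calling the edges in $S = E_b$ blue, those in $E_p := N_{\Gamma_0 - F}(E_b)$ purple, and the rest red, it partitions $V(G) = P \cup R \cup B \cup U$ according to which colour dominates at each vertex, and applies $G$-edge-expansion to $B$, the mostly-blue vertices. The crux---and what your proposal does not supply---is bounding $e_G(B,U)$: each $v \in U$ is incident to many blue \emph{and} many red edges, so if $U$ is large one finds a red-red-blue-blue four-cycle, which would give a forbidden $\Gamma_0$-edge between $E_b$ and $E_r$; hence $U$ is small, $e_G(B,\overline{B})$ is small, and the edge-expansion of $G$ forces $B$ or $\overline{B}$ (and then $E_b$ or $E_r$) to be small, a contradiction (Claim~\ref{cl:large-sets-robustly-expand}). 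Finally, the iterative pruning and potential function are also unnecessary: since failures of robust expansion are closed under unions, one removes a single maximal non-expanding set and invokes maximality directly, as in Lemma~\ref{lem:almost_spanning_expander}.
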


More precisely, Lemma~\ref{lem:almostspanninginformal} shows that we obtain an excellent robust (vertex) expander $\Gamma$ by removing only a small proportion of vertices and edges from $\Gamma_0$ i.e., $v(\Gamma) \ge (1-\varepsilon) v(\Gamma_0)$ for some small $\varepsilon > 0$. This ensures that the independence number of $\Gamma$ is at most $v(\Gamma_0)/C \le v(\Gamma)/2C$, so it is now feasible to find a proper odd cycle in $\Gamma$. 

Before outlining the proof of Lemma~\ref{lem:almostspanninginformal}, let us sketch how to complete the proof assuming the lemma holds. Since Lemma~\ref{lem:almostspanninginformal} ensures $v(\Gamma) \ge (1-\varepsilon) v(\Gamma_0) = (1-\varepsilon) N \ge Cn^{3/2}/2$,  a simple supersaturation argument implies that $\Gamma$ is \emph{locally-dense} in the following sense: For any $U \subseteq V(\Gamma)$ with $|U| \ge N / 100,$ we have $e(\Gamma[U]) \ge \Omega(C^4 n^2).$
Since $\Gamma$ is a robust expander, starting at any given vertex in $\Gamma$, almost all other vertices of $\Gamma$ can be reached by a short \emph{proper} path. Moreover, the set of these reachable vertices contains many edges of $\Gamma$  because $\Gamma$ is locally-dense. Using this, we obtain that most pairs of vertices $x, y$ in $\Gamma$ are connected by a short odd proper path $P_o(x,y)$ as well as a short even proper path $P_e(x,y)$ in a robust manner i.e., avoiding a small set of vertices in $G$ but it is still possible that $P_o(x,y)$ and $P_e(x,y)$ have a common vertex in $G$ (and to establish this robustness property, we actually show that the edges of the expander $\Gamma$ correspond to four cycles in $G$ in which any opposite pair of vertices has small codegree, see the last paragraph of the proof overview for more details). Combining this with ideas from Letzter~\cite{letzter2023hypergraphs}, we obtain a pair $x, y$ in $\Gamma$ such that there are linearly many proper paths $\{Q_i(x,y)\}_i$ connecting $x$ and $y$, $P_o(x,y)$ and $P_e(x,y)$ are defined, and moreover, every vertex of $G$ in $P_o(x,y)$ and $P_e(x,y)$ does not appear on too many of the paths $\{Q_i(x,y)\}_i$. This implies that there is a proper path $Q_j(x,y)$ such that either $Q_j(x,y) \cup P_o(x,y)$ or $Q_j(x,y) \cup P_e(x,y)$ is the desired proper odd cycle in $\Gamma$.

Let us now sketch the proof of Lemma~\ref{lem:almostspanninginformal} (ignoring robustness for simplicity).
We first argue that all sets of size at least $\varepsilon N$ and at most $0.99 N$ in $\Gamma_0$ expand well. Suppose for a contradiction that there is a set $E_b \subseteq V(\Gamma_0)$ with $\varepsilon N \le |E_b| \le 0.99 N$ that does not expand i.e., $N_{\Gamma_0}(E_b) < \delta|E_b|$ for some $\delta > 0$. Let $E_p \coloneqq N_{\Gamma_0}(E_b)$ and let $E_r \coloneqq  V(\Gamma_0) \setminus (E_b \cup E_p)$. Note that $E_b, E_p, E_r$ are vertex sets in $\Gamma_0$, so they are edge sets in $G$. For convenience, call the edges of $G$ in $E_b, E_p, E_r$, \emph{blue}, \emph{purple} and \emph{red} respectively. We now partition the vertices of $G$ into four sets $P, R, B, U$ as follows: Roughly speaking, a vertex $v \in V(G)$ is in $P$ if at least half of the edges incident to $v$ are purple; $v$ is in $R$ if most of the edges incident to $v$ are red; $v$ is in $B$ if most of the edges incident to $v$ are blue; $v$ is in $U$ if there are sufficiently many blue edges as well as red edges incident to $v$. Now, we claim that $B$ does not (edge) expand very well in $G$. Indeed, the number of purple edges in $G$ is small (because we assumed $|E_p| < \delta|E_b|$), and there are very few blue and red edges between $B$ and $R$ (by the definition of $B$ and $R$), thus $e_G(B, R)$ is very small. The number of edges incident to $P$ is at most four times the number of purple edges, which is assumed to be small. This implies $e_G(B, P)$ is small. Moreover, crucially, the size of $U$ must be very small, because each vertex of $U$ provides many blue-red paths (by definition) and if $U$ is large, we obtain a \emph{red-red-blue-blue} four-cycle (i.e., a four-cycle of the form $xuyv$ where $xu, xv$ are red and $uy, vy$ are blue). However, such a four-cycle in $G$ corresponds to an edge in $\Gamma_0$ between $E_b$ and $E_r$ which is impossible. This implies that $e_G(B, U)$ is also small. In total, we obtain that $e_G(B, \overline{B}) = e_G(B, R) + e_G(B, P) + e_G(B, U)$ is very small, i.e., $B$ does not expand well in $G$, as claimed. On the other hand we know that all large enough sets (of size at most $v(G)/2$) in $G$ expand very well, so either $|B|$ or $|\overline{B}|$ is very small, which in turn implies that either $|E_b|$ or $|E_r|$ is very small, a contradiction. Hence, all sets of size at least $\varepsilon N$ and at most $0.99 N$ in $\Gamma_0$ expand well, as desired. Now, removing a maximal non-expanding set among the sets of size at most $0.99 N$ in $\Gamma_0$, we obtain a (vertex) expander $\Gamma$. Since such a maximal set must have size at most $\varepsilon N$, $\Gamma$ is an almost-spanning expander of $\Gamma_0$, as required by Lemma~\ref{lem:almostspanninginformal}. 

Let us highlight that we actually need that sets in $\Gamma$ vertex-expand robustly i.e., while also avoiding a given set $B \subseteq V(G)$ of size, say, $|B| = \Omega(\log n)$. This is not straightforward to achieve. If we simply remove all vertices 
in $\Gamma$ corresponding to the edges of $G$ incident to $B$, we may lose more than $\Theta(\sqrt{n} \log n)$ vertices from $\Gamma$ which makes it impossible for small sets in $\Gamma$ to expand robustly (in which case, starting at a vertex, we cannot robustly reach other vertices and our proof fails). 
To overcome this difficulty, we prove that $\Gamma$ actually satisfies two crucial properties: $\mathrm{(i)}$ $\Gamma$ is a robust expander in a very strong sense (that ties vertex and edge expansion together) as follows: every set $S$ (which is not too large) in $\Gamma$ expands well even after the removal of any small set $F$ of edges in $\Gamma$, see Definition \ref{def:robust-expansion}. $\mathrm{(ii)}$ Every edge in $\Gamma$ corresponds to a four-cycle $xyzw$ in $G$ for which both $d_G(x,z)$ and $d_G(y,w)$ are small enough.
Then, $\mathrm{(ii)}$ ensures that the number of edges in $\Gamma$ incident to a set $S$ which have an endpoint in $B$ are small enough, so that the notion of robustness in $\mathrm{(i)}$ can be used to show that $S$ expands well while also avoiding $B$.

\section{Expansion}
\label{sec:expansion}

In this section we show that every graph contains a subgraph with maximum degree close to its average degree and with very strong edge-expansion (see Lemma~\ref{lem:expandersubgraph}). For this we need the following notion of $\alpha$-maximal graphs. The idea of taking a subgraph maximizing some density function dates back to the work of Koml\'{o}s and Szemer\'edi \cite{komlos1996topological} and the definition we are going to use has recently appeared in the work of Tomon~\cite{tomon2022robust}.

\begin{defn}[$\alpha$-maximal graphs]
\label{def:alphamaximal}
Let $\alpha > 0$. A graph $G$ is $\alpha$-maximal if for every subgraph $H$ of $G$, we have $$ \frac{e(H)}{v(H)^{1+\alpha}} \le \frac{e(G)}{v(G)^{1+\alpha}}.$$ In other words, if $e(G) = C v(G)^{1+\alpha}$, then $e(H) \le C v(H)^{1+\alpha}$ for every subgraph $H$ of $G$. 
\end{defn}

In \cite{tomon2022robust} it is shown that $\alpha$-maximal graphs are excellent edge-expanders. This is made precise in the following simple lemma. We include its short proof here for completeness. 

\begin{lemma}[Lemma 2.5 in~\cite{tomon2022robust}] 
\label{edgeexpander}
Let $0 < \alpha <1$, let $G$ be an $\alpha$-maximal graph on $n$ vertices, and let $e(G) = C n^{1+\alpha}$. Then $G$ satisfies the following properties.
\begin{itemize}
    \item[$\mathrm{(i)}$] If $G$ is non-empty, then $C \ge 1/4$. 
    \item[$\mathrm{(ii)}$] Let $S \subset V(G)$, and let $|S| \le n/2$. Then $e(S, V(G) \setminus S) \ge |S|  \frac{\alpha C n^{\alpha}}{4}$.
\end{itemize}
\end{lemma}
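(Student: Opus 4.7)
The plan is to deduce both parts directly from the $\alpha$-maximality inequality, applied to carefully chosen subgraphs.

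For part $\mathrm{(i)}$, I would simply take $H$ to be a single edge of $G$, which is non-empty by hypothesis, so $v(H)=2$ and $e(H)=1$. Definition~\ref{def:alphamaximal} then gives $C \geq \frac{1}{2^{1+\alpha}}$, and since $\alpha \leq 1$ we have $2^{1+\alpha} \leq 4$, yielding $C \geq 1/4$.

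For part $\mathrm{(ii)}$, write $s = |S|$ and $T = V(G) \setminus S$. The idea is to apply $\alpha$-maximality to the two induced subgraphs $G[S]$ and $G[T]$ to obtain
$$e(G[S]) \leq C s^{1+\alpha}, \qquad e(G[T]) \leq C(n-s)^{1+\alpha}.$$
Then from $e(G) = e(G[S]) + e(G[T]) + e(S,T)$ and $e(G) = Cn^{1+\alpha}$ I immediately get
$$e(S,T) \;\geq\; C\bigl(n^{1+\alpha} - s^{1+\alpha} - (n-s)^{1+\alpha}\bigr).$$
So the task reduces to the purely analytic inequality
$$n^{1+\alpha} - s^{1+\alpha} - (n-s)^{1+\alpha} \;\geq\; \frac{\alpha n^{\alpha} s}{4} \qquad \text{for } 0 \leq s \leq n/2.$$

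The main (and only nontrivial) step is verifying this inequality, which I expect to be the one requiring care. My plan is to substitute $t = s/n \in (0, 1/2]$ and rewrite it as $h(t)/t \geq \alpha/4$, where $h(t) := 1 - (1-t)^{1+\alpha} - t^{1+\alpha}$. Since $h$ is concave on $[0,1]$ (both $t^{1+\alpha}$ and $(1-t)^{1+\alpha}$ are convex) and satisfies $h(0)=0$, the function $h(t)/t$ is decreasing on $(0, 1/2]$, so it attains its minimum at $t=1/2$, where it equals $2 - 2^{1-\alpha}$. It then suffices to show $2 - 2^{1-\alpha} \geq \alpha/4$ for $\alpha \in (0,1]$; this follows by letting $\phi(\alpha) = 2 - 2^{1-\alpha} - \alpha/4$, observing $\phi(0)=0$, and checking that $\phi'(\alpha) = 2^{1-\alpha}\ln 2 - 1/4$ remains positive throughout $[0,1]$ (since $\phi'$ is decreasing and $\phi'(1) = \ln 2 - 1/4 > 0$). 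This finishes the bound and hence the lemma.
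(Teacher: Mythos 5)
Your proof is correct. Part $\mathrm{(i)}$ is exactly the paper's argument. For part $\mathrm{(ii)}$ you reach the same reduction as the paper, namely applying $\alpha$-maximality to $G[S]$ and $G[\overline{S}]$ to get $e(S,\overline{S}) \ge C\bigl(n^{1+\alpha} - s^{1+\alpha} - (n-s)^{1+\alpha}\bigr)$, but you then handle the resulting analytic inequality by a different method. The paper applies Bernoulli's inequality, $(1 + |S|/|\overline{S}|)^{1+\alpha} \ge 1 + (1+\alpha)|S|/|\overline{S}|$, multiplies through by $|\overline{S}|^{1+\alpha}$, and concludes in essentially one line that $n^{1+\alpha} - s^{1+\alpha} - |\overline{S}|^{1+\alpha} \ge s\bigl((1+\alpha)|\overline{S}|^{\alpha} - s^{\alpha}\bigr) \ge \alpha s(n/2)^{\alpha} \ge \alpha s n^{\alpha}/4$. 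You instead normalise by $n$, use concavity of $h(t)=1-(1-t)^{1+\alpha}-t^{1+\alpha}$ together with $h(0)=0$ to deduce that $h(t)/t$ is non-increasing, reduce to the extremal case $t=1/2$, and then verify $2-2^{1-\alpha}\ge\alpha/4$ by a separate monotonicity argument in $\alpha$. Both routes are elementary and fully correct; the Bernoulli route is a one-step direct estimate with the constant $4$ absorbing the lost factor $2^{\alpha}$, while your concavity route first isolates the worst-case set size and then treats the $\alpha$-dependence, which costs one extra calculus step but makes explicit where the bound is tightest.
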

\begin{proof}
Since $G$ is non-empty, it contains an edge. Let $H$ be the subgraph of $G$ consisting of this edge. Since $G$ is $\alpha$-maximal, we have $e(H) \le C v(H)^{1+\alpha}$, so $1 \le C 2^{1+\alpha}$ i.e., $C \ge 1/4$, showing (i) holds. 

Now we show (ii). We have $e(S, \overline{S}) = e(G) - e(S) - e(\overline{S}) \ge C(|S| + |\overline{S}|)^{1+ \alpha} - C|S|^{1+ \alpha} - C|\overline{S}|^{1+ \alpha}$ since $G$ is $\alpha$-maximal. Moreover, since $\left (1 + \frac{|S|}{|\overline{S}|} \right)^{1+ \alpha} \ge 1 + (1+\alpha) \frac{|S|}{|\overline{S}|}$, $|\overline{S}| \ge n/2$, and $|S| \le |\overline{S}|$ we have
\begin{equation*}
C(|S| + |\overline{S}|)^{1+ \alpha} - C|\overline{S}|^{1+ \alpha} - C|S|^{1+ \alpha} \ge C( (1+ \alpha)|S||\overline{S}|^{\alpha} - |S|^{1+ \alpha}) = C |S| ((1+ \alpha)|\overline{S}|^{\alpha} - |S|^{\alpha}) \ge \alpha C |S| (n/2)^{\alpha}.
\end{equation*}
Noting that $(n/2)^{\alpha} \ge n^{\alpha}/4$, this proves (ii).
\end{proof}

Note that every graph $G$ contains an $\alpha$-maximal subgraph since a subgraph $H$ of $G$ maximising the quantity $e(H)/v(H)^{1+\alpha}$ (over all subgraphs) is $\alpha$-maximal. Thus Lemma~\ref{edgeexpander} shows that every graph contains a subgraph which is an excellent edge-expander. In our proof, we will also need that the maximum degree of this subgraph is not too large. In the next lemma we show that every graph contains a subgraph $H$ whose maximum degree is close to its average degree, and all large enough sets in the subgraph $H$ expand very well. For convenience, we only state the lemma for $\alpha = 1/2$ but it holds more generally, and may be of independent interest.

\begin{lemma}
\label{lem:expandersubgraph}
    Let $C_1, D$ be constants satisfying $1000 \le D \le C_1 / 1000$ and let $G$ be an $n$-vertex graph with $C_1 n^{3/2}$ edges. Then, $G$ contains an $m$-vertex subgraph $H$ satisfying the following properties for some $C > 0$.
    
    \begin{enumerate}[label=$\mathrm{(P\arabic*)}$]
        \item $e(H) = C m^{3/2},$ where $C \ge C_1 / 2,$ \label{prop:edges}
        \item $\Delta(H) \le CD  m^{1/2},$ \label{prop:max-degree}
        \item for every $S \subseteq V(H)$, $e(S) \le 2C |S|^{3/2},$ \label{prop:no-dense-set} 
        \item for every $S \subseteq V(H), \frac{1000 m}{D} \le |S| \le \frac{m}{2},$ $e_H(S, V(H) \setminus S) \ge |S|  \frac{C m^{1/2}}{16}$. \label{prop:edge-expansion} 
    \end{enumerate}    
\end{lemma}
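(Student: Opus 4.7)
The plan is to pass to a $1/2$-maximal subgraph $G'$ of $G$ and then delete a carefully chosen set of high-degree vertices. First, let $G'$ be a $1/2$-maximal subgraph of $G$, so that $C_0 := e(G')/v(G')^{3/2}$ is the maximum of the density ratio over all subgraphs of $G$; since $G$ itself is a candidate, $C_0 \ge C_1$. Writing $m_0 := v(G')$, the $1/2$-maximality yields three useful facts: (a) $e_{G'}(S) \le C_0 |S|^{3/2}$ for every $S \subseteq V(G')$; (b) by Lemma~\ref{edgeexpander}(ii), every $S \subseteq V(G')$ with $|S| \le m_0/2$ satisfies $e_{G'}(S, V(G') \setminus S) \ge |S| C_0 m_0^{1/2}/8$; and (c) removing any single vertex $v \in V(G')$ of degree $d$ cannot increase the density, giving $d \ge C_0[m_0^{3/2} - (m_0-1)^{3/2}] \ge \frac{3}{2} C_0 \sqrt{m_0-1}$, hence $\delta(G') \ge \frac{3}{2} C_0 \sqrt{m_0 - 1}$.

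Next, pick a threshold $T := c_1 C_0 D m_0^{1/2}$ for a suitable constant $c_1 > 0$, set $B := \{v \in V(G') : \deg_{G'}(v) > T\}$, and put $H := G'[V(G') \setminus B]$ with $m := v(H)$ and $C := e(H)/m^{3/2}$. Markov's inequality gives $|B| \le 2 e(G')/T = O(m_0/D)$, so $m \ge m_0(1 - O(1/D))$. Using (c), $\sum_{v \in V(G') \setminus B} \deg_{G'}(v) \ge (m_0 - |B|) \cdot \frac{3}{2} C_0 \sqrt{m_0-1}$, so $\sum_{v \in B} \deg_{G'}(v) = 2 e(G') - \sum_{v \in V(G') \setminus B} \deg_{G'}(v) \le \bigl(\frac{1}{2} + O(1/D)\bigr) C_0 m_0^{3/2}$; since the edges deleted from $G'$ to form $H$ are at most this sum, $e(H) \ge \bigl(\frac{1}{2} - O(1/D)\bigr) C_0 m_0^{3/2}$. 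I would then verify (P1)--(P4): (P1) follows from $C \ge \bigl(\frac{1}{2} - O(1/D)\bigr) C_0 \cdot (m_0/m)^{3/2} \ge C_1/2$, using $C_0 \ge C_1$ and $D \ge 1000$; (P2) from $\Delta(H) \le T \le C D m^{1/2}$ by a suitable choice of $c_1$ together with $C \ge C_0/2$; (P3) directly from (a), since for $S \subseteq V(H) \subseteq V(G')$, $e_H(S) = e_{G'}(S) \le C_0 |S|^{3/2} \le 2 C |S|^{3/2}$; and (P4) by combining the edge expansion (b) with an upper bound on $e_{G'}(S, B)$ (using the $1/2$-maximality bound $e_{G'}(S \cup B) \le C_0 (|S|+|B|)^{3/2}$ and degree-sum bounds), the hypothesis $|S| \ge 1000 m/D$ providing the slack.

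The main obstacle I expect is the tight bookkeeping required for (P1): the naive estimate gives $C$ only marginally below $C_0/2$ because of the $(m_0/m)^{3/2}$ correction in the denominator, and pushing $C \ge C_1/2$ requires carefully exploiting $C_0 \ge C_1$ together with $D \ge 1000$ so that the $O(1/D)$ loss is absorbed. Property (P4) is the second delicate step, since the crude bound $e_{G'}(S, B) \le \sum_{v \in B} \deg_{G'}(v)$ is too weak on its own; one needs a case analysis in $|S|$ to bound $e_{G'}(S, B)$ against the expansion term $|S| C_0 m_0^{1/2}/8$ throughout the full range $1000 m/D \le |S| \le m/2$.
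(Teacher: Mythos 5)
Your construction departs from the paper's in one important respect: you delete the high-degree vertex set $B$, whereas the paper keeps all $m$ vertices of the $1/2$-maximal subgraph $H_0$ and removes only the \emph{edges} incident to the high-degree set $U$ (so $v(H)=v(H_0)$ there). That change is not itself a problem; the gap is in your estimate of the number of deleted edges, which is not sharp enough to recover $C \ge C_1/2$.

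Concretely, the minimum-degree bound you derive in observation (c), $\delta(G') \ge \tfrac32 C_0 \sqrt{m_0-1}$, is only three-quarters of the average degree $2C_0\sqrt{m_0}$ of a $1/2$-maximal graph, so all it gives is $\sum_{v\in B}\deg_{G'}(v) \le \bigl(\tfrac12 + \tfrac32\beta + o(1)\bigr)C_0 m_0^{3/2}$, where $\beta := |B|/m_0$. This yields only
\[
e(H) \ge C_0 m_0^{3/2}\Bigl(\tfrac12 - \tfrac32\beta - o(1)\Bigr), \qquad
C = \frac{e(H)}{m^{3/2}} \ge C_0 \cdot \frac{\tfrac12 - \tfrac32\beta}{(1-\beta)^{3/2}} - o(1).
\]
The function $g(\beta) = \bigl(\tfrac12 - \tfrac32\beta\bigr)(1-\beta)^{-3/2}$ satisfies $g(0)=\tfrac12$ and $g'(0)=-\tfrac34$, so $g(\beta) < \tfrac12$ for every $\beta>0$: the gain from the $(m_0/m)^{3/2}$ factor ($\approx 1+\tfrac32\beta$) does not cancel the loss ($\approx 1-3\beta$) in the numerator. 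Hence whenever $B\ne\emptyset$ and $C_0=C_1$ (which can certainly occur, since we only know $C_0\ge C_1$), your bound gives $C < C_1/2$ and (P1) fails. The same deficit breaks (P3): you only know $e_H(S) \le C_0|S|^{3/2}$, so you need $C \ge C_0/2$, which your estimate does not provide. (P2) as you argue it also presumes $C \ge C_0/2$.

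The paper avoids this by proving a much stronger bound on the deleted edges: with $t$ the number of edges incident to $U$ and $s:=2t/(C_0Dm^{1/2})$, a uniformly random $W\subseteq V(H_0)\setminus U$ of size $s$ gives $\E[e(H_0[U\cup W])]\ge ts/m$, while $1/2$-maximality caps $e(H_0[U\cup W])\le 2C_0(2s)^{3/2}$; combining forces $t \le 64C_0 m^{3/2}/D$, so the deleted edges are an $O(1/D)$-fraction of all edges, not a $\bigl(\tfrac12-O(1/D)\bigr)$-fraction. Your observation (c) is a pleasant consequence of $1/2$-maximality but carries no such sharpness; to close the gap you would need to replace the minimum-degree estimate with this random-subset argument (or some equivalent that bounds the edges incident to $B$ by $O(e(G')/D)$). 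Once that is done, deleting vertices rather than edges would still work, but at that point it is no simpler than the paper's route.
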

\begin{proof}
    Let $H_0$ be a $1/2$-maximal subgraph of $G$ and let $v(H_0) = m$ and $e(H_0) = 2C_0 m^{3/2}$. Then $2 C_0 \ge C_1$ (since $H_0$ is $1/2$-maximal). Let $U$ be the set of vertices in $H_0$ with degree at least $C_0 D m^{1/2}$ and let $t$ denote the number of edges in $H_0$ incident to $U$. Note that $|U| \le \frac{2t}{C_0 D m^{1/2}} \eqqcolon s < \frac{m}{2},$ where in the last inequality we used $D \ge 1000$. Let $W$ be a random subset of $V(H_0) \setminus U$ of size $s$. Then, $\E[H_0[U \cup W]] \ge ts / m.$ Indeed, any edge of $H_0$ with both endpoints in $U$ is also in $H_0[U \cup W]$ and an edge of $H_0$ with a single endpoint in $U$ appears in $H_0[U \cup W]$ with probability at least $s / m.$ So there is a set $Q$ of size at most $2s < m$ vertices spanning at least $ts / m$ edges. Since $H_0$ is $1/2$-maximal, we have $ts/m \le e(H_0[Q]) \le 2C_0 (2s)^{3/2}$.
    Rearranging and using the definition of $s$ yields $t \le \frac{64 C_0 m^{3/2}}{D}.$
    Remove from $H_0$ all the $t$ edges incident to $U$, and call this new graph $H$ (which is still on $m$ vertices) and denote $e(H) = Cm^{3/2} = 2C_0 m^{3/2} - t \ge 2C_0 m^{3/2} - \frac{64 C_0 m^{3/2}}{D}$. Then, since $D \ge 1000$, we have $2 C_0 \ge C \ge C_0 \ge C_1 / 2,$ so \ref{prop:edges} holds.
    
    Since we removed all edges incident to vertices of degree at least $C_0 D m^{1/2}$ when defining $H$, we have $\Delta(H) \le C_0 D m^{1/2} \le C D m^{1/2}$, so \ref{prop:max-degree} holds. Moreover, since $H_0$ is $1/2$-maximal, we have $e_{H_0}(S) \le 2 C_0 |S|^{3/2}$, so $e_{H}(S) \le 2 C_0 |S|^{3/2} \le 2 C |S|^{3/2}$, so \ref{prop:no-dense-set} holds.  
    
    By Lemma~\ref{edgeexpander} (ii), for every set $S$ satisfying $|S| \le \frac{m}{2},$ we have $e_{H_0}(S, V(H_0) \setminus S) \ge |S| \frac{C_0 m^{1/2}}{4}$, so $e_{H}(S, V(H) \setminus S) \ge  |S| \frac{C_0 m^{1/2}}{4}  - t \ge |S| \frac{C_0 m^{1/2}}{4} - \frac{64 C_0 }{D} m^{3/2}$. Moreover, if $|S| \ge \frac{1000 m}{D}$,  we have $|S| \frac{C_0 m^{1/2}}{4} - \frac{64 C_0 }{D} m^{3/2} \ge |S| \frac{C_0 m^{1/2}}{8} \ge |S| \frac{C m^{1/2}}{16} .$ Thus \ref{prop:edge-expansion} holds.
\end{proof}

\section{Proof of Theorem~\ref{thm:main}}
\label{sec:proof}

In this section, we prove Theorem~\ref{thm:main}.
Throughout the proof, we fix small positive constants $\delta, \eta, \eps$ and large constants $C_1, D$ such that $$1 \gg \eps \gg \eta \gg \delta \gg 1/D \gg 1/C_1 > 0.$$

 Take a graph $G_0$ with $e(G_0) \ge 4C_1 v(G_0)^{3/2}.$ Our goal is to show that $G_0$ contains a cycle with all diagonals. It is convenient for us to work with a suitable subgraph of $G_0$ instead of $G_0$. Let $G_1$ be a bipartite subgraph of $G_0$ with $e(G_1) \ge e(G_0)/2 \ge 2C_1 v(G_0)^{3/2}$, and let $G$ be the (expander) subgraph obtained by applying Lemma~\ref{lem:expandersubgraph} to $G_1$.  For convenience, denote $v(G) = n$. Then \ref{prop:edges} and \ref{prop:max-degree} of Lemma~\ref{lem:expandersubgraph} ensure that $e(G) = Cn^{3/2}$ and $\Delta(G) \le CD n^{1/2}$ for some $C \ge C_1$. It suffices to show that $G$ contains a cycle with all diagonals.  We introduce an auxiliary graph as follows.

\begin{defn}[$C_4$-graph]
For a bipartite graph $G$ with the bipartition $\{X,Y\}$, its $C_4$-graph $\Gamma(G)$ is defined as a graph with the vertex set $V(\Gamma(G)) = E(G)$ and $xy, x'y' \in V(\Gamma(G))$ are adjacent in $\Gamma(G)$ if and only if $xyx'y'$ is a four-cycle in $G,$ where $x, x' \in X, y, y' \in Y.$ Note that every $4$-cycle in $G$ contributes $2$ edges to $\Gamma(G)$.
\end{defn}

 Let $\Gamma_0 \coloneqq \Gamma_0(G)$ be the $C_4$-graph of $G$.
A four-cycle $xyx'y'$ in $G$ is called \emph{thick} if $\max\{d_G(x, x'), d_G(y, y')\} \ge 10 \sqrt{CD} n^{1/4},$ otherwise it is called \emph{thin}. 
Let $F_T$ be the set of edges of $\Gamma_0$ corresponding to the thick four-cycles in $G$. By the following simple claim we will be able to assume that there are not many thick four-cycles in $G$. A similar idea is used by Gao, Janzer, Liu and Xu in \cite{gao2023extremal}.

\begin{claim} \label{claim:thick}
    If $|F_T| \ge 100 C^2 D n^2$, then $G$ contains $C^{\textrm{dia}}_6$.
\end{claim}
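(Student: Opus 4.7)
My plan is to turn the lower bound on $|F_T|$ into a lower bound on the number of $K_{2,3}$'s in $G$, then apply the Kővári--Sós--Turán double-counting identity to force a $K_{3,3}$ (which, as noted in the introduction, is exactly $C_6^{\mathrm{dia}}$).

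Since each $4$-cycle of $G$ contributes exactly $2$ edges to $\Gamma_0$, the hypothesis gives at least $50 C^2 D n^2$ thick $4$-cycles. Every thick $4$-cycle has a diagonal of codegree at least $t := 10\sqrt{CD}\,n^{1/4}$, so by the pigeonhole principle at least $25 C^2 D n^2$ of them have their thick diagonal on one fixed side, say the $X$-side (the other case is symmetric). Writing $\mathcal{P}_X := \{\{x_1,x_2\}\subset X : d_G(x_1,x_2)\ge t\}$ and noting that each pair in $\mathcal{P}_X$ is the $X$-diagonal of $\binom{d_G(x_1,x_2)}{2}$ four-cycles, I get
\[
  \sum_{\{x_1,x_2\}\in\mathcal P_X}\binom{d_G(x_1,x_2)}{2}\;\ge\;25\,C^2 D n^2.
\]

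The key step is then to upgrade $\binom{d}{2}$ to $\binom{d}{3}$. Because every $d = d_G(x_1,x_2)$ appearing in the sum satisfies $d\ge t$, we have $\binom{d}{3}=\binom{d}{2}\cdot\frac{d-2}{3}\ge\binom{d}{2}\cdot\frac{t}{6}$, so
\[
  \sum_{\{x_1,x_2\}\in\mathcal P_X}\binom{d_G(x_1,x_2)}{3}\;\ge\;\frac{t}{6}\cdot 25\,C^2 D n^2\;\ge\;40\,C^{5/2}D^{3/2}n^{9/4}.
\]
The left-hand side is exactly the number of $K_{2,3}$'s in $G$ with the two vertices in $X$ coming from a thick pair, and by the standard double-counting identity it equals $\sum_{\{y_1,y_2,y_3\}\subset Y}\binom{|N(y_1)\cap N(y_2)\cap N(y_3)|}{2}$.

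To finish, I would assume for contradiction that $G$ contains no $C_6^{\mathrm{dia}}=K_{3,3}$. Then for every triple $\{y_1,y_2,y_3\}\subset Y$ we have $|N(y_1)\cap N(y_2)\cap N(y_3)|\le 2$, hence each summand on the right is at most $1$ and the total is at most $\binom{|Y|}{3}\le n^3/6$. Combining this with the lower bound from the previous display produces a contradiction, yielding $K_{3,3}\subseteq G$.

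The main obstacle I anticipate is making the arithmetic close across the full range of $n$: the KST-style upper bound $\binom{|Y|}{3}\le n^3/6$ is rather crude, and the lower bound above scales only like $n^{9/4}$. To absorb the gap one may need to also exploit the maximum-degree bound $\Delta(G)\le CDn^{1/2}$ (which caps $|N\cap N\cap N|$ and hence the number of ``$f=2$'' triples more sharply than $n^3/6$) or to run the same argument symmetrically for thick $Y$-pairs when the $X$-side count is insufficient; this is where the precise choice of the constants $100$ in $|F_T|$ and $10$ in $t$ should be used.
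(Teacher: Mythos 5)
Your plan is structurally reasonable, but the obstacle you flag at the end is a genuine gap, not just a matter of tightening constants, and the max-degree fix you suggest does not close it. Your lower bound on the number of $K_{2,3}$'s with thick $X$-side is $\Omega(C^{5/2}D^{3/2}n^{9/4})$. On the other hand, assuming $G$ is $K_{3,3}$-free, every summand with $f=|N(y_1)\cap N(y_2)\cap N(y_3)|\le 2$ satisfies $\binom{f}{2}\le f/2$, so the standard identity $\sum_{\{y_1,y_2,y_3\}\subset Y} f = \sum_{x\in X}\binom{d(x)}{3}$, together with convexity, $\sum_x d(x)=e(G)=Cn^{3/2}$ and $d(x)\le\Delta(G)\le CDn^{1/2}$, gives only
\[
\sum_{\{y_1,y_2,y_3\}\subset Y}\binom{f}{2}\;\le\;\frac12\sum_{x\in X}\binom{d(x)}{3}\;=\;O\!\left(C^3D^2 n^{5/2}\right).
\]
Since $n^{5/2}\gg n^{9/4}$, there is no contradiction for large $n$ (and the crude $\binom{n}{3}$ bound is worse still). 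Summing over all $\binom{n}{3}$ triples in $Y$ is simply too lossy, and neither the restriction to thick pairs on the $X$-side nor the degree cap reduces the triple count enough; a straightforward global $K_{2,3}$-to-$K_{3,3}$ double count cannot reach the claim from the given hypotheses.

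The paper avoids this by localizing to a single edge of $G$. Averaging over $e(G)=Cn^{3/2}$ edges produces an edge $uv$ lying in at least $50CDn^{1/2}$ thick four-cycles, and (up to symmetry) at least $25CDn^{1/2}$ of these have $d_G(u,x)\ge 10\sqrt{CD}\,n^{1/4}$. One then forms the bipartite graph $H=G\bigl[N(u),\{x\in N(v):d_G(u,x)\ge 10\sqrt{CD}\,n^{1/4}\}\bigr]$: thick four-cycles through $uv$ of this type become edges of $H$, giving $e(H)\ge 25CDn^{1/2}$, while the codegree lower bound gives the second estimate $e(H)\ge 10t\sqrt{CD}\,n^{1/4}$ where $t$ is the size of the second part. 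Because $H$ sits inside $N(u)$ with $|N(u)|\le\Delta(G)\le CDn^{1/2}$, the K\H{o}v\'ari--S\'os--Tur\'an threshold $|N(u)|+t\sqrt{|N(u)|}$ is small enough that these two bounds beat it, so $H$ contains a $C_4$ and together with $u,v$ this yields $K_{3,3}\cong C^{\textrm{dia}}_6$. You should replace your global count with this local argument; the averaging step is what makes the KST bound sharp enough to close.
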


The proof of Claim~\ref{claim:thick} uses the following well-known bound for the asymmetric Zarankiewicz problem.
\begin{theorem}[K\H{o}v\'{a}ri, S\'{o}s and Tur\'{a}n~\cite{kHovari1954problem}] \label{theorem:zarankiewicz-c4}
    A bipartite graph with parts of size $m$ and $n$ and no four-cycle has at most $m \sqrt{n} + n$ edges.
\end{theorem}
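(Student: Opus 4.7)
The plan is to prove the Kővári–Sós–Turán bound by a standard double counting argument over cherries, combined with convexity. Let $H$ be a bipartite graph with parts $A$ and $B$ of sizes $m$ and $n$ respectively, no $C_4$, and $e := e(H)$ edges. We may assume $e \ge n$, since otherwise $e \le n \le m\sqrt{n} + n$ trivially.

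First, I would count pairs of vertices in $A$ with a common neighbour in $B$. For each $u \in B$ of degree $d_u$, the number of unordered pairs in $A$ that share $u$ as a common neighbour is exactly $\binom{d_u}{2}$. On the other hand, the $C_4$-freeness of $H$ means that any pair of distinct vertices in $A$ has at most one common neighbour in $B$, since two common neighbours would yield a $C_4$. Hence
\[
\sum_{u \in B} \binom{d_u}{2} \;\le\; \binom{m}{2} \;\le\; \frac{m^2}{2}.
\]

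Next, I would apply Jensen's inequality to the convex function $f(x) = \tfrac{x(x-1)}{2}$. Writing $\sum_{u \in B} d_u = e$, this gives $\sum_{u \in B} \binom{d_u}{2} \ge n \cdot \binom{e/n}{2} = \frac{e(e-n)}{2n}$. Combining with the previous inequality yields
\[
e(e-n) \;\le\; m^2 n, \qquad \text{i.e.,} \qquad e^2 - ne - m^2 n \;\le\; 0.
\]
Solving this quadratic in $e$ gives $e \le \tfrac{1}{2}\bigl(n + \sqrt{n^2 + 4m^2 n}\bigr)$, and using the elementary inequality $\sqrt{n^2 + 4m^2 n} \le n + 2m\sqrt{n}$ (which holds since $(n + 2m\sqrt{n})^2 = n^2 + 4mn\sqrt{n} + 4m^2 n \ge n^2 + 4m^2 n$) we conclude
\[
e \;\le\; \frac{n + n + 2m\sqrt{n}}{2} \;=\; m\sqrt{n} + n,
\]
as desired.

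There is no serious obstacle here: the whole argument is the classical cherry-counting proof, and every step is a one-line manipulation. The only minor point to be careful about is checking the case $e < n$ separately, so that the convexity step is applied in a regime where the lower bound $\binom{e/n}{2}$ is non-trivial, and verifying the elementary bound $\sqrt{n^2 + 4m^2 n} \le n + 2m\sqrt{n}$ used to simplify the solution of the quadratic.
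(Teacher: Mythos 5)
Your proof is correct: it is the classical K\H{o}v\'{a}ri--S\'{o}s--Tur\'{a}n double counting of cherries (pairs in the part of size $m$ with a common neighbour), combined with convexity and solving the resulting quadratic, and all the estimates, including $\sqrt{n^2+4m^2n}\le n+2m\sqrt{n}$, check out. Note that the paper itself states this bound only as a cited classical result and gives no proof, so there is nothing to compare against; your argument is the standard one and suffices. (Incidentally, the separate case $e<n$ is not even needed, since for $e<n$ the inequality $e(e-n)\le m^2 n$ holds trivially and the rest of the computation goes through unchanged.)
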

\begin{proof}[Proof of Claim~\ref{claim:thick}]
    Suppose that $|F_T| \ge 100 C^2 D n^2.$ By averaging there is an edge $uv$ in $G$ which is contained in at least $|F_T| / (2Cn^{3/2}) \ge 50 CD n^{1/2}$ thick four-cycles and without loss of generality, at least $25 CD n^{1/2}$ of those four-cycles are of the form $uvxy$ such that $d_G(u, x) \ge 10 \sqrt{CD} n^{1/4}.$ Let $\{x_1, \dots, x_t\}$ denote the set of all vertices $x \in N(v)$ such that $d_G(u, x) \ge 10 \sqrt{CD} n^{1/4}.$ Consider the bipartite graph $H = G[ N(u), \{x_1, \dots, x_t\}].$ Every thick four-cycle of the form $uvxy$ with $d_G(u, x) \ge 10 \sqrt{CD} n^{1/4}$ corresponds to an edge $xy$ in $H$. Hence we have $e(H) \ge 25 CD n^{1/2}$ and by the definition of the set $\{x_1, \dots, x_t\}$, we have $e(H) \ge t \cdot 10 \sqrt{CD} n^{1/4}.$ So,
    \[ e(H) > 10 CD n^{1/2} + 5t \sqrt{CD} n^{1/4} > \Delta(G) + t \sqrt{\Delta(G)} \ge |N(u)| + \sqrt{|N(u)|}t. \]
    Thus, by Theorem~\ref{theorem:zarankiewicz-c4}, $H$ contains a four-cycle. The four vertices of this four-cycle along with $\{u, v\}$ form a $K_{3,3}.$ Finally, note that $K_{3,3}$ is isomorphic to $C^{\textrm{dia}}_6$, the $6$-cycle with all diagonals, finishing the proof of the claim.    
\end{proof}

From now on we assume $|F_T| < 100 C^2 D n^2$ since, otherwise, we are done by Claim~\ref{claim:thick}. We shall use the following well-known supersaturation theorem for four-cycles.
\begin{theorem}[Erd\H{o}s and Simonovits \cite{erdHos1983supersaturated}]  \label{thm:supersaturation}
    Let $C \ge 10$ and let $G$ be an $n$-vertex graph with $Cn^{3/2}$ edges. Then, $G$ contains at least $C^4 n^2 / 2$ four-cycles. 
\end{theorem}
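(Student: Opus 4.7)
The plan is to apply the standard double-counting / Jensen's inequality argument. Let $\bar d = 2e(G)/n = 2Cn^{1/2}$ denote the average degree of $G$. First I would count cherries (paths of length two). The number of cherries in $G$ equals $\sum_{v \in V(G)} \binom{d(v)}{2}$, and by convexity of $\binom{x}{2}$ together with Jensen's inequality this is at least
\[
n \binom{\bar d}{2} \;=\; n \cdot \frac{\bar d(\bar d-1)}{2} \;\ge\; n \cdot \frac{(2Cn^{1/2})^2}{2} - n\bar d \;\ge\; 2C^2n^2 - 2Cn^{3/2}.
\]
On the other hand, each cherry $uwv$ contributes one to the codegree $d_G(u,v)$, so
\[
\sum_{\{u,v\} \subseteq V(G)} d_G(u,v) \;=\; \sum_{v \in V(G)} \binom{d(v)}{2} \;\ge\; 2C^2n^2 - 2Cn^{3/2}.
\]

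Next I would observe that every four-cycle $xyx'y'$ has exactly two pairs of opposite (diagonally placed) vertices, namely $\{x,x'\}$ and $\{y,y'\}$, and conversely every unordered pair $\{u,v\}$ with $d_G(u,v) \ge 2$ gives rise to $\binom{d_G(u,v)}{2}$ four-cycles with diagonal $\{u,v\}$. Consequently,
\[
\#\{C_4\text{ in } G\} \;=\; \frac{1}{2}\sum_{\{u,v\} \subseteq V(G)} \binom{d_G(u,v)}{2}.
\]
Applying Jensen's inequality once more, this time to the pairs $\{u,v\}$, I can bound the right-hand side from below by $\frac{1}{2}\binom{n}{2}\binom{\mu}{2}$ where $\mu$ is the average codegree. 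Using the previous paragraph, $\mu \ge (2C^2n^2 - 2Cn^{3/2})/\binom{n}{2} \ge 4C^2 - o(C^2)$ for $C \ge 10$ and $n$ large, so that
\[
\#\{C_4\text{ in } G\} \;\ge\; \frac{1}{2}\binom{n}{2}\cdot \frac{\mu(\mu-1)}{2} \;\ge\; \frac{C^4 n^2}{2}.
\]

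There is really no obstacle in this proof: it is two applications of Jensen followed by the combinatorial identity relating codegrees to $C_4$'s. The only mild subtlety is handling the lower-order terms and the small-$n$ cases where $\bar d$ or $\mu$ might not dominate the $-1$ in $\binom{\cdot}{2}$; but the assumption $C \ge 10$ leaves enough slack in the constants that these error terms are absorbed. If one wanted to avoid a separate small-$n$ argument, it would be cleanest to use the inequality $\binom{x}{2} \ge \frac{x^2}{2} - \frac{x}{2}$ throughout and to verify that the deficits $n\bar d$ and $\binom{n}{2}\mu$ are negligible compared with the main terms $2C^2n^2$ and $2C^4n^2$ respectively, which holds as soon as $n$ is moderately large — and one can quote the result for a constant range of $n$ by noting that any graph with $Cn^{3/2}$ edges on few vertices is nearly complete and trivially has many $C_4$'s.
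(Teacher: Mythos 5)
The paper does not prove this statement; it is quoted as a known supersaturation result of Erd\H{o}s and Simonovits \cite{erdHos1983supersaturated}, so there is no internal proof to compare against. Your argument is the standard convexity proof, and it is essentially correct: count cherries via $\sum_v \binom{d(v)}{2} \ge n\binom{\bar d}{2}$, translate this into a lower bound on the average codegree $\mu$, and then apply Jensen again to $\frac{1}{2}\sum_{\{u,v\}}\binom{d_G(u,v)}{2}$, which is exactly the number of labeled diagonal pairs and hence twice the number of $C_4$'s. Carrying out the arithmetic carefully gives roughly $2C^4n^2$ four-cycles, comfortably above the claimed $C^4n^2/2$.

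One small remark that would tighten the final paragraph: there is in fact no ``small $n$'' regime to worry about, because the hypothesis already rules it out. Since $G$ is a simple graph, $Cn^{3/2} = e(G) \le \binom{n}{2} < n^2/2$, which forces $n > 4C^2 \ge 400$. With $n \ge 4C^2$ one gets $\mu \ge 4C^2 - 4C/\sqrt{n} \ge 4C^2 - 2$, and the lower-order terms in both Jensen steps are then easily absorbed using $C \ge 10$; no separate case analysis or appeal to ``nearly complete graphs'' is needed. Apart from that cosmetic point and a slightly loose constant in the first display (where you replace $-n\bar d/2$ by $-n\bar d$, which is fine since it only weakens a lower bound), the proposal is sound.
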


\subsection{Finding an almost-spanning expander $\Gamma$ in the $C_4$-graph of $G$}
\label{subsec:mainlemma}

In this subsection, we prove our main lemma, Lemma~\ref{lem:almost_spanning_expander}, which shows that we can turn $\Gamma_0$ into an excellent (robust) expander $\Gamma$ by removing very few edges and vertices from $\Gamma_0$. As explained in the proof overview, the following notion of robust expansion of vertex sets in the $C_4$-graph is crucial to our proof. Related notions of robust expansion (but for \emph{sublinear} expansion) were recently used in \cite{haslegrave2022extremal, bucic2022towards, sudakov-tomon, letzter2023hypergraphs}.

\begin{defn}[Robust expansion]\label{def:robust-expansion}
    Let $\Gamma$ be an arbitrary subgraph of $\Gamma_0$. We say that a set $E_b \subseteq V(\Gamma)$ is \emph{$\delta$-robustly-expanding} in $\Gamma$ if for any set $F \subseteq E(\Gamma)$ with $|F| \le |E_b| \cdot \delta C^3 \sqrt{n},$ we have $|N_{\Gamma - F}(E_b)| \ge \delta |E_b|.$ If all sets of size at most $0.98 |V(\Gamma)|$ are $\delta$-robustly expanding in $\Gamma,$ then we say $\Gamma$ is a $\delta$-robust-expander.
\end{defn}

Denote $N_0 \coloneqq |V(\Gamma_0)| = C n^{3/2} = e(G).$ By Theorem~\ref{thm:supersaturation}, we have $e(\Gamma_0) \ge C^4 n^2 / 2.$ The following claim shows that all large enough vertex-sets of size up to $0.99N_0$ in $\Gamma_0$ are robustly expanding in the sense defined above.

\begin{claim} \label{cl:large-sets-robustly-expand}
    Any set $E_b \subseteq V(\Gamma_0)$ with $\eps N_0 \le |E_b| \le 0.99N_0$ is $2\delta$-robustly-expanding in $\Gamma_0$.
\end{claim}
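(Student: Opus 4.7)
The plan is to argue by contradiction: suppose there exists $F \subseteq E(\Gamma_0)$ with $|F| \le 2\delta C^3 n^{1/2} |E_b|$ such that $|N_{\Gamma_0 - F}(E_b)| < 2\delta |E_b|$. We set $E_p := N_{\Gamma_0 - F}(E_b)$ and $E_r := V(\Gamma_0) \setminus (E_b \cup E_p)$, and colour the edges of $G$ \emph{blue}, \emph{purple}, \emph{red} according to the part they lie in. By the very choice of $E_p$, every edge of $\Gamma_0$ between $E_b$ and $E_r$ lies in $F$, so the number of four-cycles in $G$ whose two pairs of opposite edges \emph{both} consist of one blue and one red edge is at most $|F|/2$; this will be the only way the upper bound on $|F|$ is used in what follows.

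Next we partition $V(G)$ into four classes $P, U, B, R$ based on the colour distribution of incident edges. Writing $b_v, r_v, p_v$ for the numbers of blue, red, and purple edges at $v$, we let $P$ be the set of vertices with $p_v \ge d(v)/2$, let $U \subseteq V(G) \setminus P$ be the set of those with $\min(b_v, r_v) \ge \eta d(v)$, and split the remaining vertices into $B$ (those with $b_v \ge r_v$) and $R$ (those with $r_v > b_v$). Elementary estimates then give $\sum_{v \in P} d(v) \le 4|E_p| \le 8\delta |E_b|$, while the colour imbalance at vertices of $B \cup R$, combined with this bound on $|E_p|$, yields small bounds for $e_G(B, P)$ and for the blue/red/purple parts of $e_G(B, R)$.

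The crucial quantitative step is to bound $e_G(B, U)$ by controlling $\sum_{v \in U} d(v)$ via a four-cycle count. Let $M := \sum_v b_v r_v$ be the total number of \emph{blue-red paths}, i.e.\ length-two paths $xvy$ with $xv$ blue and $vy$ red. Each vertex $v \in U$ contributes at least $\eta^2 d(v)^2$ to $M$. On the other hand, any two blue-red paths $xvy$ and $xv'y$ with the same pair of endpoints $x, y$ form a four-cycle in $G$ whose \emph{both} pairs of opposite edges consist of one blue and one red edge, and is hence counted by the key inequality of the first paragraph. Applying Jensen's inequality over the $O(n^2)$ same-side endpoint pairs yields $M = O(n\sqrt{|F|})$, and a Cauchy-Schwarz step then gives $\sum_{v \in U} d(v) = O(\delta^{1/4}/\eta) \cdot Cn^{3/2}$; together with $\Delta(G) \le CDn^{1/2}$ from \ref{prop:max-degree} this controls $e_G(B, U)$.

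Putting the bounds together, $e_G(B, \overline{B})$ is much smaller than the edge-expansion lower bound $\tfrac{Cn^{1/2}}{16}\min(|B|, |\overline{B}|)$ from \ref{prop:edge-expansion}, so $\min(|B|, |\overline{B}|)$ is forced to lie below the threshold $1000n/D$ at which that property ceases to apply. If $|B|$ is this small, the total blue-degree in $B$ is at most $O(\delta^{1/4}D/\eta) \cdot Cn^{3/2}$ by the max-degree bound, while the estimates above give $\sum_{v \in P \cup R \cup U} b_v = O(\delta + \eta + \delta^{1/4}/\eta) \cdot Cn^{3/2}$; their sum is too small to reach $\sum_v b_v = 2|E_b| \ge 2\eps N_0$, contradicting the lower bound on $|E_b|$. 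The case $|\overline{B}|$ small is symmetric, using the analogous red-degree identity to deduce $|E_r| = o(N_0)$, which contradicts $|E_b| \le 0.99 N_0$. The main obstacle is the four-cycle counting step: the factor $\delta C^3 n^{1/2}$ in Definition~\ref{def:robust-expansion} is calibrated precisely so that the resulting bound on $M$ dominates the loss of a factor $D$ from the max-degree bound, and the hierarchy $1/D \ll \delta \ll \eta \ll \eps \ll 1$ is what makes all the arising inequalities compatible.
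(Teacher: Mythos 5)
Your overall plan mirrors the paper's argument almost exactly: the same partition $P, U, B, R$ of $V(G)$ (your definition of $B$ and $R$ via $b_v \ge r_v$ is equivalent to the paper's via the $\eta d(v)$ threshold, once $P$ and $U$ have been removed), the same four-cycle counting via Jensen/Cauchy--Schwarz to control $\sum_{v\in U}d(v)$, the same use of edge-expansion to pin down $\min(|B|,|\overline B|)$, and then contradicting the size of $E_b$ or $E_r$. However, the final step contains a genuine error.

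First, the conclusion from property~\ref{prop:edge-expansion} is \emph{not} that $\min(|B|,|\overline B|)$ falls below $1000n/D$. Since $e_G(B,\overline B) = O(\eta)Cn^{3/2}$ (the dominant contribution coming from the $\le 4\eta e(G)$ blue/red edges between $B$ and $R$), applying \ref{prop:edge-expansion} only gives $\min(|B|,|\overline B|) \le \max\{1000n/D,\ O(\eta) n\} = O(\eta)n$; the threshold $1000n/D$ is merely where the property stops being applicable, not what the inequality forces. Second, and more seriously, using the max-degree bound $\Delta \le CDn^{1/2}$ to control $\sum_{v\in B}d(v)$ does not work: with $|B| = O(\eta)n$ this gives $\sum_{v\in B}d(v) \le O(\eta D)\,Cn^{3/2}$, and since the hierarchy has $\delta \gg 1/D$ (hence $\eta D \gg 1$), this bound is much larger than $\eps Cn^{3/2}$ and gives no contradiction. (Even if $|B| < 1000n/D$ were true, the max-degree bound would give only $\sum_{v\in B}d(v) \le 1000\,Cn^{3/2}$, still far too weak, and the factor $\delta^{1/4}D/\eta$ you quote is not small in the stated hierarchy.) What is actually needed here is property~\ref{prop:no-dense-set}, which was engineered precisely for this step: $e_G(B) \le 2C|B|^{3/2} = O(\eta^{3/2})Cn^{3/2}$ when $|B|=O(\eta)n$. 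Combined with the already-small $e_G(B,\overline B)$, this bounds $\sum_{v\in B}b_v$ by $O(\eta)Cn^{3/2} \ll \eps N_0$ and closes the argument; the symmetric case $|\overline B| = O(\eta)n$ likewise uses \ref{prop:no-dense-set} applied to $R$. So the idea is right, but the specific tool invoked in the last step fails, and the proof as written does not yield the contradiction.
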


\begin{proof}[Proof of Claim~\ref{cl:large-sets-robustly-expand}]
    Let $E_b \subseteq V(\Gamma_0)$ be a set with $\eps N_0 \le |E_b| \le 0.99N_0$ and suppose for the sake of contradiction that there is a set $F \subseteq E(\Gamma_0)$ such that $|F| \le |E_b| \cdot 2\delta C^3 \sqrt{n}$ and $|N_{\Gamma_0 - F}(E_b)| < 2\delta |E_b|.$ Let $E_p = N_{\Gamma_0 - F}(E_b)$ and $E_r = V(\Gamma_0) \setminus (E_b \cup E_p).$ Note that $|E_p| < 2 \delta |E_b|$. We shall call the edges in $E_b, E_p, E_r \subseteq E(G)$ blue, purple and red, respectively. Note that these are edges in $G$ and thus vertices in $\Gamma_0.$ For each $v \in V(G)$, let $d_b(v), d_p(v), d_r(v)$ denote the number of blue, purple and red edges incident to $v$, respectively. We will partition the vertex set of $G$ into four sets $P, R, B, U$ such that for any $v \in V(G)$ we have:
    \begin{align*}
        v \in
        \begin{cases}
            P,& \text{if } d_p(v) \ge d(v) / 2,\\
            R,& \text{if } d_p(v) < d(v) / 2 \text{ and } d_b(v) < \eta d(v),\\
            B,& \text{if } d_p(v) < d(v) / 2 \text{ and } d_r(v) < \eta d(v),\\
            U,& \text{if } d_p(v) < d(v) / 2 \text{ and } d_b(v), d_r(v) \ge \eta d(v).
        \end{cases}
    \end{align*}
    Note that since $\eta < 1/4,$ $P \cup R \cup B \cup U$ is a partition of $V(G).$ First, we will show that there are few edges of $G$ incident to $U$. To this end, let $e_U = \sum_{v \in U} d(v).$ Note that for any $v \in U,$ the number of 2-paths $xvy$ such that $xv \in E_r, vy \in E_b$ is at least $d_r(v) d_b(v) \ge \eta d(v) (1/2 - \eta) d(v) \ge \frac{\eta}{4} d(v)^2.$ By Jensen's inequality, the total number of 2-paths $xvy$ with $v \in U,$ $xv \in E_r, vy \in E_b$ (i.e., $xv$ is red and $vy$ is blue) is at least 
    \[ \frac{\eta}{4} |U| \left(\frac{e_U}{|U|}\right)^2 = \frac{\eta}{4} \frac{e_U^2}{|U|} \ge \frac{\eta}{4}\frac{e_U^2}{n}. \]
    Hence, on average, for a pair $x, y$ there are at least $\frac{\eta}{4} \frac{e_U^2}{n^3}$ choices for $v \in U$ such that $xv \in E_r, vy \in E_b$. If $e_U \ge \frac{20}{\eta^{1/2}} n^{3/2} $, then $\frac{\eta}{4} \frac{e_U^2}{n^3} \ge 10$, so 
    by Jensen's inequality, there are at least
    \[ \binom{n}{2} \binom{\frac{\eta}{4} \frac{e_U^2}{n^3}}{2} \ge \frac{\eta^2}{100} \frac{e_U^4}{n^4} \]
    four-cycles in $G$ of the form $xuyv$ where $xu, xv$ are red and $uy, vy$ are blue. Let us call such four-cycles \emph{red-red-blue-blue} four-cycles. Note that any red-red-blue-blue four-cycle in $G$ yields an edge between $E_r$ and $E_b$ in $\Gamma_0.$ By definition, $\Gamma_0-F$ has no edges between $E_b$ and $E_r,$ therefore all of these red-red-blue-blue four-cycles in $G$ correspond to edges in $F.$ Therefore, we obtain
    \[ \frac{\eta^2}{100} \frac{e_U^4}{n^4} \le |F| \le |E_b| \cdot 2\delta C^3 \sqrt{n} \le 2\delta C^4 n^2, \]
    which implies $e_U  \le (2\delta)^{1/5} Cn^{3/2}$, where we used $\delta \ll \eta.$ Thus, using that $\frac{1}{C} \ll \delta, \eta,$ we have $$e_U = \sum_{v \in U} d(v) \le \max\{(2\delta)^{1/5} Cn^{3/2}, \frac{20}{\eta^{1/2}} n^{3/2} \} = (2\delta)^{1/5} Cn^{3/2}.$$

    Furthermore, the number of edges incident to $P$ is at most
    \[ e_P \coloneqq \sum_{v \in P} d(v) \le 2 \sum_{v \in P} d_p(v) \le 4 |E_p| \le 8 \delta |E_b|. \]

    Finally, let us bound the number of edges between the vertex sets $B$ and $R$. There are at most $2\eta e(G)$ blue edges between $B$ and $R$. Indeed, otherwise, there is a vertex $v \in R$ incident to at least $\eta d(v)$ blue edges, a contradiction. Analogously, there are at most $2\eta e(G)$ red edges between $B$ and $R$. Trivially, there are at most $|E_p|$ purple edges between $B$ and $R$. Thus, $e_G(B, R) \le 4\eta e(G) + |E_p| \le 4\eta e(G) + 2 \delta |E_b|.$

    Since $e_G(B, \overline{B}) \le e_U + e_P + e_G(B, R)$, putting all of the above bounds together, we have
    \[ e_G(B, \overline{B}) \le (2\delta)^{1/5} Cn^{3/2} + 8 \delta |E_b| + (4 \eta e(G) + 2 \delta |E_b|) \le 5 \eta e(G), \] 
    where we used that $\delta \ll \eta$ and $e(G) = Cn^{3/2}.$ Hence, since $G$ satisfies \ref{prop:edge-expansion} of Lemma~\ref{lem:expandersubgraph}, we have either $|B| \le \frac{1000 n}{D}$ or $|\overline{B}| \le \frac{1000 n}{D}$ or 
    \[ \min\{|B|, |\overline{B}|\} \le \frac{16 e_G(B,\overline{B})}{C n^{1/2}} \le 80 \eta n. \] 

    Since $D^{-1} \ll \eta,$ this implies $|B| \le 80 \eta n$ or $|B| \ge (1 - 80\eta)n.$ Suppose first $|B| \le 80 \eta n.$ Since $G$ satisfies~\ref{prop:no-dense-set}  of Lemma~\ref{lem:expandersubgraph}, we have $e_G(B) \le 2C|B|^{3/2}$. Therefore, we have
    \begin{align*}
        |E_b| &\le e_G(B) + \sum_{v \in R} d_b(v) + \sum_{v \in P} d(v) + \sum_{v \in U} d(v)\\
        &\le 2C |B|^{3/2} + 2\eta e(G) + 8 \delta |E_b| + (2\delta)^{1/5} Cn^{3/2}\\
        &\le (2(80\eta)^{3/2} + 2 \eta + 8\delta + (2\delta)^{1/5}) Cn^{3/2} < \eps N_0,
    \end{align*}
    where we used that $\eps \gg \eta, \delta.$ This contradicts our assumption that $|E_b| \ge \eps N_0.$

    Therefore, we have $|B| \ge (1 - 80 \eta) n$ and thus $|R| \le 80\eta n.$ Note that since $G$ satisfies \ref{prop:no-dense-set}  of Lemma~\ref{lem:expandersubgraph}, we have $e_G(R) \le 2C|R|^{3/2} \le 2 C (80\eta n)^{3/2}$. Thus, we can bound $|E_r|$ as follows.
    \begin{align*}
        |E_r| &\le e_G(R) + \sum_{v \in B} d_r(v) + \sum_{v \in P} d(v) + \sum_{v \in U} d(v) \\
        &\le 2C (80\eta n)^{3/2} + 2\eta e(G) + 8\delta e(G) + (2\delta)^{1/5} Cn^{3/2} \\
        &\le (2(80\eta)^{3/2} + 2 \eta + 8 \delta + (2\delta)^{1/5}) e(G) \le  3\eta e(G),
    \end{align*}
    where we used $1 \gg \eta \gg \delta.$ Thus,
    \[ |E_b| + |E_p| + |E_r| \le 0.99e(G) + 2\delta e(G) + 3 \eta e(G) < e(G), \]
    a contradiction. This proves the claim.    
\end{proof}

Now our aim is to show that $\Gamma_0$ contains an almost-spanning robust expander $\Gamma$. To that end, let $\Gamma' \coloneqq \Gamma_0 - F_T,$ where $F_T$ is the set of edges of $\Gamma_0$ corresponding to the thick four-cycles of $G$. Let $E_0$ be a maximal set of size at most $0.99N_0$ which is not $\delta$-robustly-expanding in $\Gamma'$. Then there exists a set $F_0$ of edges in $\Gamma'$ such that $|N_{\Gamma' - F_0}(E_0)| < \delta |E_0|$ and $|F_0| \le |E_0| \cdot \delta C^3 \sqrt{n}$. Let $\Gamma \coloneqq \Gamma' \setminus E_0$ and $N \coloneqq |V(\Gamma)|.$ In the next lemma, we show that $\Gamma$ is a robust expander containing all but at most $\varepsilon$-proportion of the vertices of $\Gamma_0$.

\begin{lemma}[Main lemma]
\label{lem:almost_spanning_expander}
    We have $|V(\Gamma)| = N \ge (1 - \eps)N_0$ and moreover, $\Gamma$ is a $\delta$-robust-expander.
\end{lemma}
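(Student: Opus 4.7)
The plan is to deduce both assertions from Claim~\ref{cl:large-sets-robustly-expand} by carefully tracking how passing between $\Gamma_0$, $\Gamma' = \Gamma_0 - F_T$, and $\Gamma = \Gamma' \setminus E_0$ affects robust expansion. The conceptual observation is that removing $F_T$ costs at most $|F_T| \le 100 C^2 D n^2$ edges, which can be absorbed into the $\delta C^3\sqrt{n}\cdot|\cdot|$ robustness budget of any sufficiently large set; and removing the vertex set $E_0$ can be compensated for by enlarging the test set by $E_0$ and invoking the maximality of $E_0$.

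\textbf{Size bound.} First I would show $|E_0| \le \eps N_0$, which gives $N \ge (1-\eps)N_0$. Suppose for contradiction $|E_0| > \eps N_0$; since also $|E_0| \le 0.99 N_0$, Claim~\ref{cl:large-sets-robustly-expand} says $E_0$ is $2\delta$-robustly-expanding in $\Gamma_0$. Set $F \coloneqq F_0 \cup F_T \subseteq E(\Gamma_0)$. The constant hierarchy $1 \gg \eps, \delta \gg 1/D \gg 1/C$ gives $\eps \delta C^2 \gg D$, so
\[
|F_T| \le 100 C^2 D n^2 \le \eps C n^{3/2}\cdot \delta C^3 \sqrt{n} \le |E_0|\cdot \delta C^3 \sqrt{n},
\]
whence $|F| \le 2\delta C^3\sqrt{n}\cdot|E_0|$. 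Robust expansion then yields $|N_{\Gamma_0 - F}(E_0)| \ge 2\delta|E_0|$. But $\Gamma_0 - F = \Gamma' - F_0$ by construction, so $|N_{\Gamma' - F_0}(E_0)| \ge 2\delta|E_0| > \delta|E_0|$, contradicting the defining property of $(E_0, F_0)$.

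\textbf{Robust expansion of $\Gamma$.} For the second assertion, suppose for contradiction some $S \subseteq V(\Gamma)$ with $|S| \le 0.98 N$ and some $F \subseteq E(\Gamma)$ with $|F| \le |S|\cdot \delta C^3 \sqrt{n}$ satisfy $|N_{\Gamma - F}(S)| < \delta |S|$. The plan is to combine $(S, F)$ with $(E_0, F_0)$ into a larger non-robustly-expanding set in $\Gamma'$, violating the maximality of $E_0$. Put $E_0' \coloneqq E_0 \cup S$ and $F' \coloneqq F_0 \cup F \subseteq E(\Gamma')$. Using $|E_0|\le \eps N_0$ (from the first part) and $N \le N_0$, we get $|E_0'|\le \eps N_0 + 0.98 N \le 0.99 N_0$, while $|F'| \le |E_0'|\cdot \delta C^3\sqrt{n}$. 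The key set-theoretic step is
\[
N_{\Gamma' - F'}(E_0') \;\subseteq\; N_{\Gamma' - F_0}(E_0) \;\cup\; N_{\Gamma - F}(S),
\]
which I would verify as follows: if $v \notin E_0'$ has a neighbor $u \in E_0'$ in $\Gamma' - F'$, then either $u \in E_0$, and since $F_0 \subseteq F'$ the edge $vu$ survives in $\Gamma' - F_0$, placing $v$ in the first set; or $u \in S \subseteq V(\Gamma)$, and then $v \in V(\Gamma') \setminus E_0 = V(\Gamma)$, so $vu$ is an edge of $\Gamma$ not in $F \subseteq F'$, placing $v$ in the second set. Combining this inclusion with the defining bound on $F_0$ and the contradiction hypothesis gives $|N_{\Gamma'-F'}(E_0')| < \delta|E_0| + \delta|S| = \delta|E_0'|$. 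Since $|E_0'| > |E_0|$ (we may take $S \neq \emptyset$), this contradicts the maximality of $E_0$ in the definition preceding the lemma.

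The main obstacle is purely bookkeeping: verifying that the constant hierarchy $\eps\delta C^2 \gg D$ makes the $F_T$ term negligible relative to the $\delta$-budget of $E_0$ in Part~1, and correctly handling the two cases in the set inclusion in Part~2 (in particular, ensuring that a neighbor of $S$ outside $E_0'$ automatically lies in $V(\Gamma)$, so that the putative failure of $S$ in $\Gamma - F$ transfers to a failure of $E_0'$ in $\Gamma' - F'$). No new probabilistic or extremal input is needed beyond Claim~\ref{cl:large-sets-robustly-expand} and the bound $|F_T| \le 100 C^2 D n^2$.
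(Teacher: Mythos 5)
Your proposal is correct and follows essentially the same route as the paper: both parts hinge on Claim~\ref{cl:large-sets-robustly-expand} and the maximality of $E_0$, and your union/set-inclusion argument for the second part is exactly the paper's estimate $|N_{\Gamma'-F}(E)|\le|N_{\Gamma'-F_0}(E_0)|+|N_{\Gamma'\setminus E_0-F_b}(E_b)|$ phrased pointwise. The constant check $100C^2Dn^2\le\delta C^3\sqrt{n}\,|E_0|$ and the observation that $E_0$ and $S$ are automatically disjoint (so $|E_0\cup S|=|E_0|+|S|$) are also as in the paper.
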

\begin{proof}
    Suppose for a contradiction that $N < (1-\eps)N_0,$ i.e., $|E_0| > \eps N_0$ (since $N_0 - |E_0| = N$). Then, $N_{\Gamma_0 - (F_0 \cup F_T)}(E_0) < \delta |E_0|.$ However, by Claim~\ref{claim:thick}, $|F_0 \cup F_T| \le \delta C^3 \sqrt{n} |E_0| + 100C^2 D n^{2} \le 2 \delta C^3 \sqrt{n} |E_0|,$ where we also used that $1/C \ll 1/D \ll \delta, \eps$ and $N_0 = C n^{3/2}.$ This means that $E_0$ is not $2\delta$-robustly-expanding in $\Gamma_0$ and $\eps N_0  < |E_0| \le 0.99N_0,$ contradicting Claim~\ref{cl:large-sets-robustly-expand}. Thus $|E_0| \le \eps N_0$, so $N \ge (1 - \eps)N_0$, proving the first part of the lemma.
    
    Now suppose for a contradiction that there is a non-empty set $E_b$ of size at most $0.98N$ which is not $\delta$-robustly-expanding in $\Gamma$. Then there is a set $F_b$ of edges in $\Gamma$ such that $|N_{\Gamma - F_b}(E_b)| = |N_{\Gamma' \setminus E_0 - F_b}(E_b)| < \delta |E_b|$ and $|F_b| \le \delta C^3 \sqrt{n} |E_b|$. Let $E = E_0 \cup E_b$ and $F = F_0 \cup F_b.$ We have $|E| = |E_0| + |E_b| \le  \eps N_0 + 0.98N \le 0.99N_0,$ $|F| \le |F_0| + |F_b| \le \delta C^3 \sqrt{n} |E_0| + \delta C^3 \sqrt{n} |E_b| = \delta C^3 \sqrt{n} |E|$ and $|N_{\Gamma' - F}(E)| \le |N_{\Gamma' - F_0}(E_0)| + |N_{\Gamma' \setminus E_0 - F_b}(E_b)| < \delta |E_0| + \delta |E_b| = \delta |E|.$ Thus, $E$ is not $\delta$-robustly-expanding in $\Gamma'$, $|E| \le 0.99N_0,$ and $|E| = |E_0| + |E_b| > |E_0|$ contradicting the maximality of $E_0.$ This proves the lemma.
\end{proof}

\subsection{Finding an odd path and an even path connecting most pairs of vertices in $\Gamma$}

In this subsection, our main goal is to show that most pairs of vertices in $\Gamma$ are connected by an odd path as well as an even path avoiding a small set of vertices in $G$ (see Lemma~\ref{lem:even-odd}). We do this by exploiting the fact that $\Gamma$ is an almost-spanning expander of the $C_4$-graph of $G$.

Let $\phi \colon V(\Gamma) \rightarrow E(G)$ be the bijection mapping vertices of  $\Gamma$ to the corresponding edges in $G$. Note that, by Lemma~\ref{lem:almost_spanning_expander}, we have $N = |V(\Gamma)| \ge (1 - \varepsilon) N_0 \ge \frac{C}{2} n^{3/2}$, and $\Gamma$ is a $\delta$-robust-expander. 

The next claim shows that in order to find a cycle with all diagonals in $G$, it suffices to find an odd cycle in $\Gamma$ satisfying a certain property made precise in the following definition.

\begin{defn}[Proper path/cycle]
    A path or a cycle $x_1, \dots, x_\ell$ in $\Gamma$ is called \emph{proper} if $\phi(x_i) \cap \phi(x_j) = \emptyset,$ for all $1 \le i < j \le \ell$, i.e., the edges of $G$ corresponding to $x_i$'s are disjoint.
\end{defn}

\begin{claim} \label{claim:cycle}
    A proper odd cycle in $\Gamma$ corresponds to a cycle with all diagonals in $G$.
\end{claim}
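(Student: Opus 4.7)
The plan is to exhibit an explicit Hamiltonian cycle on the $4k+2$ vertices corresponding to the proper odd cycle together with its $2k+1$ diagonals, and verify that all required edges are present in $G$. Let the proper odd cycle in $\Gamma$ be $e_1, e_2, \ldots, e_{2k+1}$ with indices read modulo $2k+1$. Since $G$ is bipartite with parts $X$ and $Y$, I write $\phi(e_i) = x_i y_i$ where $x_i \in X$ and $y_i \in Y$. Properness of the cycle ensures that the $4k+2$ vertices $\{x_i, y_i : 1 \le i \le 2k+1\}$ of $G$ are pairwise distinct. For each $i$, the edge $e_i e_{i+1}$ of $\Gamma$ arises from a four-cycle $x_i y_i x_{i+1} y_{i+1}$ in $G$, which in particular provides the two \emph{crossing} edges $y_i x_{i+1}$ and $x_i y_{i+1}$, in addition to the two \emph{matching} edges $x_i y_i$ and $x_{i+1} y_{i+1}$.

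Next, I define a candidate Hamiltonian cycle $v_1 v_2 \cdots v_{4k+2} v_1$ by setting $v_{2j-1} = x_{a_j}$ and $v_{2j} = y_{b_j}$ for $j = 1, \ldots, 2k+1$, where $a_j \coloneqq (2j-1) \bmod (2k+1)$ and $b_j \coloneqq 2j \bmod (2k+1)$, with representatives taken in $\{1, \ldots, 2k+1\}$. Since $\gcd(2, 2k+1) = 1$, the maps $j \mapsto a_j$ and $j \mapsto b_j$ are each bijections of $\{1, \ldots, 2k+1\}$, so the $v_\ell$'s are $4k+2$ pairwise distinct vertices. Along this cycle, each consecutive pair of endpoints has small indices in $\{1, \ldots, 2k+1\}$ differing by $1$ modulo $2k+1$, so the resulting edge is one of the crossing edges $x_i y_{i+1}$ or $y_i x_{i+1}$ furnished by the adjacency $e_i e_{i+1}$ in $\Gamma$; the wrap-around edge $v_{4k+2} v_1 = y_{2k+1} x_1$ is similarly supplied by the adjacency $e_{2k+1} e_1$.

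Finally, I verify that the $2k+1$ matching edges $x_m y_m = \phi(e_m)$ for $m = 1, \ldots, 2k+1$ are precisely the $2k+1$ diagonals $v_j v_{j+2k+1}$ of this cycle. A short modular computation shows that $v_j$ and $v_{j+2k+1}$ share the same small index: if $j = 2i-1$ is odd then $j + 2k + 1 = 2(i+k)$ is even and $2(i+k) \equiv 2i - 1 \pmod{2k+1}$, so $v_j = x_a$ and $v_{j+2k+1} = y_a$ for $a = (2i-1) \bmod (2k+1)$; the case $j$ even is entirely analogous, swapping the roles of $x$ and $y$. As $j$ ranges over $\{1, \ldots, 2k+1\}$, the index $a$ covers every element of $\{1, \ldots, 2k+1\}$ exactly once, so every matching edge appears as a diagonal. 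The only real obstacle is the modular-arithmetic bookkeeping, which is purely mechanical once the explicit labeling is fixed; collecting all the edges then realizes $C^{\textrm{dia}}_{4k+2}$ as a subgraph of $G$.
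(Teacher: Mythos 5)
Your proof is correct and takes essentially the same approach as the paper's: both arguments construct an explicit Hamiltonian ordering of the $4k+2$ vertices (your $v_1,\dots,v_{4k+2}$ is literally the paper's $w_1,\dots,w_{2\ell}$, just described via modular arithmetic rather than a four-case piecewise definition) and then observe that the $\phi(e_i)$ become the diagonals while the crossing edges of the $C_4$'s supply the cycle edges.
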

\begin{proof}
    Let $x_1, \dots, x_\ell$ be a proper odd cycle in $\Gamma,$ and denote $\phi(x_i) = u_iv_i \in E(G)$ for any $i \in [\ell],$ where the vertices $u_1, \dots, u_\ell$ are in the same part of the bipartition of $G$. Since $x_1, \dots, x_\ell$ is a cycle, it follows that $u_iv_{i+1}, u_{i+1}v_i \in E(G)$ for all $i \in [\ell],$ where we denote $v_{\ell+1} = v_1$ and $u_{\ell+1} = u_1$.

    For $i \in [2\ell],$ denote
    \[ w_i =
    \begin{cases}
        u_i, &\text{if } i \le \ell \text{ and } i \text{ is odd};\\
        v_i, &\text{if } i \le \ell \text{ and } i \text{ is even};\\
        v_{i-\ell}, &\text{if } i > \ell \text{ and } i \text{ is even};\\
        u_{i-\ell}, &\text{if } i > \ell \text{ and } i \text{ is odd}.
    \end{cases}\]
    Because $x_1, \dots, x_\ell$ is a proper cycle, all the vertices $w_1, \dots, w_{2\ell}$ are distinct. It is straightforward to check that $w_iw_{i+1} \in E(G)$ for all $i \in [2\ell]$ (with $w_{2\ell+1} = w_1$) and $w_iw_{i+\ell} \in E(G)$ for all $i \in [\ell].$ In other words, the vertices $w_1, \dots, w_{2\ell}$ form a copy of $C^{\textrm{dia}}_{2\ell}$ in $G$. This proves the claim.
\end{proof}

Slightly abusing notation, for a path $P = x_1, \dots, x_{\ell}$ in $\Gamma,$ we denote $\phi(P) = \bigcup_{i=1}^{\ell} \phi(x_i)$ i.e., 
$\phi(P)$ is the set of vertices in $G$ which are contained in the edges of $G$ corresponding to the vertices of $P$ in $\Gamma$. We say that a vertex $v \in V(G)$ appears on $P$ if $v \in \phi(P).$
The following notion of a nice fan and Lemma~\ref{lem:fan} are inspired by the ideas of Letzter~\cite{letzter2023hypergraphs} (see also \cite{jiang2021rainbow, bip_smooth_Jiang}).

\begin{defn}[Fan]
    Given $x \in V(\Gamma)$, a \emph{fan} $\calP$ rooted at $x$ is a collection  $\{Q(y)\}_{y \in Y}$  of proper paths in $\Gamma$, where $Y \subseteq V(\Gamma)$ and for each $y \in Y$, $Q(y)$ is a proper path starting at $x$ and ending at $y$. The \emph{size} of a fan is the number of paths in it. A fan $\calP$ is \emph{nice} if every $v \in V(G) \setminus \phi(x)$ appears on at most $t \coloneqq C^{1.1} n^{5/4} \log n$ paths in $\calP$.
\end{defn}

The following lemma shows that by starting at any given vertex in $\Gamma$, we can reach almost all other vertices of $\Gamma$ via proper paths that are not too long while also avoiding a small set $B$ of vertices in $G$. Our notion of robust expansion (guaranteeing vertex expansion in $\Gamma$ even after the removal of any relatively small set of edges) is crucial for the proof of the lemma.

\begin{lemma} \label{lem:fan}
    Let $x_0 \in V(\Gamma)$ be arbitrary and let $B \subseteq V(G)$ be a set of at most $n^{1/4}$ vertices in $G$ such that $\phi(x_0) \cap B = \emptyset.$ Then, there is a nice fan $\calP$ rooted at $x_0$ of size at least $0.98 N$ such that every $P \in \calP$ is a proper path of length at most $\frac{2 \log N}{\delta}$ and  $\phi(P) \cap B = \emptyset$.
\end{lemma}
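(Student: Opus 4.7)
The plan is to build the fan via breadth-first search in $\Gamma$ rooted at $x_0$, using the $\delta$-robust-expansion of $\Gamma$ from Lemma~\ref{lem:almost_spanning_expander}. I maintain a set $R \subseteq V(\Gamma)$ of already reached vertices, each assigned a proper path $P(z)$ from $x_0$, starting with $R = \{x_0\}$ and the trivial path $(x_0)$; I also track the count $c(v) := |\{z \in R : v \in \phi(P(z))\}|$ for each $v \in V(G)$. In each of $L = 2\log N/\delta$ rounds, I extend $R$ by invoking robust expansion of $R$ with a carefully chosen forbidden edge set $F$, so that the resulting $\ge \delta|R|$ new vertices are each added to $R$ together with a non-forbidden predecessor and the corresponding extended proper path. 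Because each round multiplies $|R|$ by at least $1+\delta$, after $L$ rounds $(1+\delta)^L \ge N$ and the BFS terminates with $|R| \ge 0.98N$; the resulting collection $\{P(z)\}_{z \in R}$ is the desired fan.

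The key step is defining $F$ so that $|F| \le |R|\cdot \delta C^3\sqrt{n}$, the threshold required by Definition~\ref{def:robust-expansion}. I place into $F$ every edge $yz$ (with $y \in R$) such that $(a)$ $\phi(z)$ meets $B$, $(b)$ $\phi(z)$ meets $\phi(P(y))$ (to preserve properness), or $(c)$ $\phi(z)$ meets a set $\mathrm{Sat} \subseteq V(G)$ of ``saturated'' vertices already close to the niceness threshold $t$. The central input is the thin four-cycle property: because all thick four-cycles were excluded from $\Gamma$ in Section~\ref{subsec:mainlemma}, every edge of $\Gamma$ corresponds to a four-cycle $abwv$ in $G$ with $d_G(a,w), d_G(b,v) \le 10\sqrt{CD}n^{1/4}$. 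Consequently, for any $y \in V(\Gamma)$ and any $v \in V(G)$, the number of $\Gamma$-neighbors $z$ of $y$ with $v \in \phi(z)$ is at most $O(\sqrt{CD}n^{1/4})$. This gives $(a) \le |R|\cdot |B|\cdot O(\sqrt{CD}n^{1/4})$, $(b) \le |R|\cdot 2L\cdot O(\sqrt{CD}n^{1/4})$, and $(c) \le |R|\cdot |\mathrm{Sat}|\cdot O(\sqrt{CD}n^{1/4})$, where $|\mathrm{Sat}| = O(|R|L/t)$ by double counting $\sum_v c(v) \le |R|\cdot 2L$. Under the parameter hierarchy $1 \gg \eps \gg \eta \gg \delta \gg 1/D \gg 1/C_1$ and with $L = 2\log N/\delta$ and $t = C^{1.1}n^{5/4}\log n$, each summand is comfortably below $|R|\cdot \delta C^3\sqrt{n}/3$, so robust expansion applies.

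The main obstacle is correctly maintaining niceness in $(c)$ online: in a single round the procedure can add up to $\delta|R|$ new paths simultaneously, and each addition increments $c(v)$ not only for $v \in \phi(z)$ but also for every $v \in \phi(P(y))$ lying in the ancestor path, so $c(v)$ can jump by more than one per round. I plan to handle this by choosing $\mathrm{Sat}$ with generous slack (for example $\{v : c(v) \ge t/2\}$) and additionally forbidding extensions from predecessors $y$ whose path $P(y)$ itself touches $\mathrm{Sat}$, so that once a vertex becomes saturated its count is frozen and cannot be pushed past $t$ by any subsequent extension. Verifying that this additional restriction does not exclude too large a fraction of $R$ — again via the thin four-cycle bound together with a careful accounting of $\sum_v c(v)$ — is the most delicate part of the argument.
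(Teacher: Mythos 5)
Your BFS-via-robust-expansion framework and your use of the thin four-cycle property to bound the forbidden edge set are both the right ingredients, and they match what the paper does. But the way you try to maintain niceness \emph{online} has a genuine gap, which is exactly the obstacle you flag as ``most delicate'' and then do not resolve. Two concrete problems. First, the saturation threshold $t/2$ does not control the within-round jump: when a round adds up to $\delta|R|$ new paths simultaneously, a vertex $v$ with $c(v)$ just below $t/2$ at the start of the round (hence $v\notin\mathrm{Sat}$) could appear in the ancestor path $\phi(P(y))$ of a handful of $y$'s that together serve as predecessors for an unbounded number of new $z$'s, pushing $c(v)$ from below $t/2$ to far above $t$ before $\mathrm{Sat}$ is recomputed. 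The factor-$2$ slack is not enough because $\delta|R|$ can be polynomially larger than $t$. Second, your extra rule of forbidding all extensions from predecessors $y$ with $\phi(P(y))\cap\mathrm{Sat}\neq\emptyset$ discards \emph{every} edge out of such $y$; these forbidden edges are not of the form ``$\phi(z)$ hits a fixed vertex,'' so the thin four-cycle bound gives you no per-$y$ control, and the resulting $|F|$ can exceed the budget $\delta C^3\sqrt{n}\,|R|$ required by Definition~\ref{def:robust-expansion} whenever $\sum_{v\in\mathrm{Sat}}c(v)$ is a constant fraction of $|R|$, which you have not ruled out.

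The paper sidesteps this entirely with a maximality argument, which is the idea you are missing: take a \emph{maximal} nice fan $\calP$ whose paths have length at most $\tfrac{2\log N}{\delta}$ and avoid $B$, and suppose $|\calP|<0.98N$. Then the set $T$ of vertices of $G$ that already lie on exactly $t$ paths of $\calP$ has size at most $O\!\left(\tfrac{N\log n}{\delta t}\right)<n^{1/4}$, and one runs the robust-expansion BFS avoiding $B'=T\cup B$ (so the forbidden set $F_i=\{xy: x\in U_i,\ \phi(y)\cap(\phi(P(x))\cup B')\neq\emptyset\}$ is controlled by the thin four-cycle bound alone). The BFS reaches $0.98N$ vertices, so some reached vertex $y$ is not an endpoint of $\calP$; appending its proper path $P(y)$ to $\calP$ increases each count $c(v)$ by at most $1$, and since $P(y)$ avoids $T$ no count can exceed $t$. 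This contradicts maximality. In short: the paper never adds more than one path at a time when checking niceness, which eliminates the online bookkeeping problem your construction runs into.
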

\begin{proof}
    Let $\calP$ be a nice fan rooted at $x_0$ of maximum size such that every path in $\calP$ is a proper path of length at most $\frac{2 \log N}{\delta} \le \frac{100 \log n}{\delta}$ and, for the sake of contradiction, assume that $|\calP| < 0.98 N.$ Let $T$ be the set of vertices in $G$ appearing on precisely $t$ paths in $\calP$ and note that $|T| \le \frac{200 N \log n}{\delta t} < n^{1/4},$ where we used $\delta \gg \frac{1}{C}.$ Denote $B' = T \cup B,$ so $|B'| \le 2n^{1/4}.$
    We recursively define a sequence of subsets $U_0 \subseteq U_1 \subseteq U_2 \ldots $ of $V(\Gamma)$ as follows. Let $U_0 \coloneqq \{x_0\},$ and suppose we have already defined $U_0, U_1, \ldots U_i$ with the property that for every $x \in U_i$, there is a proper path $P(x)$ in $\Gamma$ from $x_0$ to $x$ of length at most $i$ such that $\phi(P(x)) \cap B' = \emptyset$ and moreover, suppose $|U_{j}| \ge (1 + \delta) |U_{j-1}|$ for every $1 \le j \le i$.  If $|U_i| < 0.98N,$ we define $U_{i+1}$ as follows. Otherwise, we stop the procedure.

    Let $U_{i+1} \coloneqq  U_i \bigcup \{ y \in V(\Gamma) \setminus U_i \mid \exists x \in U_i, xy \in E(\Gamma), \phi(y) \cap (\phi(P(x)) \cup B') = \emptyset \}$. Note that for every vertex $y \in U_{i+1}$, there exists a proper path $P(y)$ in $\Gamma$ from $x_0$ to $y$ of length at most $i+1$ such that $\phi(P(y)) \cap B' = \emptyset$. 

    We claim that $|U_{i+1}| \ge (1 + \delta) |U_i|.$ Indeed, consider an arbitrary $x \in U_i$, and an arbitrary $b \in \phi(P(x)) \cup B'$. Let $\phi(x) = \{x_1, x_2\} \in E(G)$. Without loss of generality, assume that $x_1$ and $b$ are on the same side of the bipartition of $G$. Then, since no edge in $\Gamma$ corresponds to a thick four-cycle in $G$, there are at most $10 \sqrt{CD} n^{1/4}$ edges in $\Gamma$ of the form $xy$ where $b \in y.$ Let $F_i \coloneqq \{xy \in E(\Gamma) \mid x \in U_i, \phi(y) \cap (\phi(P(x)) \cup B') \not = \emptyset \}$. 
For any $x \in U_i$, since $P(x)$ has length at most $i$, we have $|\phi(P(x))| \le 2i$. Moreover, since $|U_{j}| \ge (1 + \delta) |U_{j-1}|$ for every $1 \le j \le i$, we have $N \ge |U_{i}| \ge (1 + \delta)^{i} \ge e^{\frac{\delta i}{2}},$ implying that $i \le \frac{2 \log N}{\delta} \le \frac{5 \log n}{\delta}$. Hence, by the above discussion, we have $$|F_i| \le |U_i| \cdot (2i + |B'|) \cdot 10 \sqrt{CD} n^{1/4} \le |U_i|\left (\frac{10 \log n}{\delta} + 2 n^{1/4}\right) 10 \sqrt{CD} n^{1/4} \le |U_i| \cdot 30 \sqrt{CD} n^{1/2} < |U_i| \cdot \delta C^3 n^{1/2},$$ where we used $\frac{1}{C} \ll \delta, \frac{1}{D}$. Since $\Gamma$ is $\delta$-robustly-expanding, this implies that $|N_{\Gamma - F_i}(U_i)| \ge \delta |U_i|$. Moreover, crucially, notice that $N_{\Gamma - F_i}(U_i) = \{ y \in V(\Gamma) \setminus U_i \mid \exists x \in U_i, xy \in E(\Gamma), \phi(y) \cap (\phi(P(x)) \cup B') = \emptyset \}$. Thus, $|U_{i+1}| \ge |U_i| + \delta |U_i|  = (1+ \delta) |U_i|,$ as claimed.

    Suppose the above procedure stops after $\ell$ steps having defined the sets $U_0 \subseteq U_1, \ldots \subseteq U_{\ell} \subseteq V(\Gamma)$. Then, again since $N \ge |U_{\ell}| \ge (1 + \delta)^{\ell} \ge e^{\frac{\delta \ell}{2}},$ we have $\ell \le \frac{2 \log N}{\delta}$ and since the procedure has stopped, we must have $|U_\ell| \ge 0.98N.$ Hence, there is a vertex $y \in U_{\ell}$ such that there is no path in $\calP$ from $x_0$ to $y$, but since $y \in U_{\ell}$, there is a proper path $P(y)$ in $\Gamma$ from $x_0$ to $y$ of length at most $\frac{2 \log N}{\delta}$ such that $\phi(P(y)) \cap B' = \emptyset$. However, then we could add the path $P(y)$ to $\calP,$ while still guaranteeing that $\calP \cup \{P(y)\}$ is a nice fan (as $\phi(P(y)) \cap T = \emptyset$), and ensuring that $\phi(P(y)) \cap B = \emptyset$. This contradicts the maximality of the fan $\calP$ and completes the proof of the lemma.
\end{proof}

The fact that $\Gamma$ is an almost-spanning expander of the $C_4$-graph of $G$ is crucial to the following claim. 

\begin{claim} \label{claim:c4-s}
    For any set $U \subseteq V(\Gamma)$ with $|U| \ge N / 100,$ it holds that $e(\Gamma[U]) \ge 10^{-11} C^4 n^2.$
\end{claim}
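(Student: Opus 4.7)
The plan is to lower-bound $e(\Gamma_0[U])$ via supersaturation on the subgraph of $G$ spanned by $U$ (viewed as an edge set of $G$), and then deduce the bound on $e(\Gamma[U])$ by subtracting the negligible number of thick $C_4$-edges $F_T$.

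First, I will view $U \subseteq V(\Gamma) \subseteq E(G)$ as a set of edges of $G$ and let $H$ be the subgraph of $G$ with edge set $U$. Then $v(H) \le n$ and $e(H) = |U| \ge N/100$. Since Lemma~\ref{lem:almost_spanning_expander} gives $N \ge (1-\varepsilon)N_0 \ge Cn^{3/2}/2$, we get $|U| \ge Cn^{3/2}/200$, which is comfortably above the $10\,v(H)^{3/2}$ threshold needed to apply Theorem~\ref{thm:supersaturation} (using $C \ge C_1 \gg 1$). Writing the density of $H$ as $C' = |U|/v(H)^{3/2}$, the supersaturation bound says that $H$ contains at least $(C')^4 v(H)^2/2 = |U|^4/(2\,v(H)^4) \ge |U|^4/(2n^4)$ four-cycles, hence at least $(Cn^{3/2}/200)^4/(2n^4) = C^4 n^2/(2\cdot 200^4)$ four-cycles.

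Second, the key combinatorial identity: every four-cycle in $H$ uses $4$ edges all of which lie in $U$, and by the definition of the $C_4$-graph it contributes exactly two edges of $\Gamma_0$ between vertices of $U$ (the two pairs of opposite edges). Therefore
\[
e(\Gamma_0[U]) \;\ge\; 2 \cdot \frac{C^4 n^2}{2\cdot 200^4} \;=\; \frac{C^4 n^2}{200^4}.
\]

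Third, I pass from $\Gamma_0$ to $\Gamma$. Recall $\Gamma = \Gamma' \setminus E_0 = (\Gamma_0 - F_T) \setminus E_0$, so the edges of $\Gamma_0[U]$ that are absent from $\Gamma[U]$ lie in $F_T$ (since $U \subseteq V(\Gamma)$, removing $E_0$ does not affect edges inside $U$). By the assumption made after Claim~\ref{claim:thick}, $|F_T| \le 100 C^2 D n^2$. Therefore
\[
e(\Gamma[U]) \;\ge\; e(\Gamma_0[U]) - |F_T| \;\ge\; \frac{C^4 n^2}{200^4} - 100 C^2 D n^2.
\]
Since $1/C_1 \ll 1/D$, we have $C \ge C_1 \gg D$, so $C^2 \gg 200^5 D$ and the first term dominates; in particular $e(\Gamma[U]) \ge C^4 n^2/(2\cdot 200^4) \ge 10^{-11} C^4 n^2$, as required.

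There is no real obstacle here: the argument is a direct supersaturation count, and the only thing to be careful about is getting the density bound right (using $v(H) \le n$ rather than $v(H) = n$ so that the $C^4 n^2$ scaling appears), together with verifying that $|F_T|$ is asymptotically smaller than the supersaturation count, which follows immediately from the constant hierarchy $C \gg D$.
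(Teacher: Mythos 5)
Your proposal is correct and follows essentially the same route as the paper: apply the Erd\H{o}s--Simonovits supersaturation theorem to the set $U$ viewed as an edge set of $G$, then subtract the contribution of the thick four-cycles $F_T$, which is negligible since $C \gg D$. The factor-of-two observation (each $C_4$ contributes two edges to $\Gamma_0$) is a valid minor refinement but is not needed, as the paper's looser accounting already gives the stated constant.
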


\begin{proof}
    Let $U \subseteq V(\Gamma)$ be a subset of vertices such that $|U| \ge N/100 \ge C n^{3/2}/200$. Note that $U$ corresponds to a set of at least $C n^{3/2} / 200$ edges in $G$. Using that $C$ is large enough, by Theorem~\ref{thm:supersaturation}, these edges induce at least $10^{-10} C^4 n^2$ four-cycles. Since $|F_T| \le 100C^2 Dn^2$ by Claim~\ref{claim:thick}, and $1/C \ll 1/D$, at least $10^{-10} C^4 n^2 - 100C^2 Dn^2 \ge 10^{-11} C^4 n^2$ of these four-cycles correspond to edges in $\Gamma[U],$ proving the claim.
\end{proof}

Using the claim above and Lemma~\ref{lem:fan}, we prove the following lemma which shows that by starting at any vertex in our almost-spanning expander $\Gamma$, we can reach almost all other vertices of $\Gamma$ via a short proper path of odd length as well as a short proper path of even length (simultaneously) while also avoiding a small set of vertices in~$G$.

\begin{lemma} \label{lem:even-odd}
     Let $x_0 \in V(\Gamma)$ be arbitrary and let $B \subseteq V(G)$ be a set of at most $n^{1/4}$ vertices in $G$ such that $\phi(x_0) \cap B = \emptyset.$ Then, there is a set $Y \subseteq V(\Gamma)$ of size $|Y| \ge 0.94N$ such that for every $y \in Y,$ there exists a proper path $P_o(y)$ of odd length in $\Gamma$ and a proper path $P_e(y)$ of even length in $\Gamma$, both of which start at $x_0$ and end at $y$ and have length at most $\frac{2 \log N }{\delta}  + 1$, and $(\phi(P_o(y)) \cup \phi(P_e(y))) \cap B = \emptyset$.
\end{lemma}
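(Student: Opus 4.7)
The plan is to apply Lemma~\ref{lem:fan} to obtain a nice fan $\calP = \{Q(y)\}_{y \in Y_0}$ rooted at $x_0$ avoiding $B$, with $|Y_0| \ge 0.98 N$ and each $Q(y)$ proper of length at most $\frac{2 \log N}{\delta}$. I would then partition $Y_0 = Y_o \cup Y_e$ according to the parity of $|Q(y)|$, and for each $y \in Y_0$ set the matching-parity path to be $Q(y)$ itself. To produce the opposite-parity path I would look for a neighbor $y'$ of $y$ in $\Gamma$ lying in the \emph{same} parity class as $y$ with $\phi(y) \cap \phi(Q(y')) = \emptyset$; then $Q(y') + y'y$ is a proper path of length $|Q(y')| + 1 \le \frac{2 \log N}{\delta} + 1$ of the opposite parity to $Q(y)$, and it avoids $B$ because $\phi(Q(y'))$ does and $\phi(y) \subseteq \phi(Q(y))$ does. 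Call $y \in Y_0$ \emph{good} if such a $y'$ exists, and \emph{bad} otherwise; the goal is to show that at most $0.04 N$ vertices of $Y_0$ are bad, which yields $|Y| \ge 0.94 N$.

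Focus on $Y_o$ (the argument for $Y_e$ is identical). Split the bad vertices of $Y_o$ into $Y_o^{0} = \{y \in Y_o : y \text{ has no neighbor in } Y_o \text{ in } \Gamma\}$ and $Y_o^{\mathrm{bad}} = \{y \in Y_o : y \text{ has at least one neighbor in } Y_o, \text{ but every such neighbor } y' \text{ satisfies } \phi(y) \cap \phi(Q(y')) \ne \emptyset\}$. Since $\Gamma[Y_o^{0}]$ has no edges by definition, Claim~\ref{claim:c4-s} immediately forces $|Y_o^{0}| < N/100$. For $Y_o^{\mathrm{bad}}$, the key observation is that every edge $yy'$ of $\Gamma[Y_o^{\mathrm{bad}}]$ is \emph{mutually blocking}, because each endpoint is bad and sees the other as a same-parity $\Gamma$-neighbor. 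Hence $2 e(\Gamma[Y_o^{\mathrm{bad}}])$ is at most the total number of ordered pairs $(y, y')$ with $y, y' \in Y_o$, $yy' \in E(\Gamma)$, and $\phi(y) \cap \phi(Q(y')) \ne \emptyset$. I would bound this total by fixing $y'$ and using that $\Gamma$ contains no thick four-cycles: for each $v \in \phi(Q(y')) \setminus \phi(y')$ at most $10\sqrt{CD}\, n^{1/4}$ vertices $y$ satisfy $yy' \in E(\Gamma)$ and $v \in \phi(y)$, and $Q(y')$ has at most $\frac{2 \log N}{\delta}$ edges, so $|\phi(Q(y'))| \le \frac{4 \log N}{\delta} + 2$. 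Summing over $y' \in Y_o$ yields at most $O(C^{3/2} D^{1/2} n^{7/4} \log n / \delta)$ ordered blockings. If $|Y_o^{\mathrm{bad}}| \ge N/100$, Claim~\ref{claim:c4-s} would force $e(\Gamma[Y_o^{\mathrm{bad}}]) \ge 10^{-11} C^4 n^2$, which together with the above bound gives $C^{2.5} n^{1/4} = O(D^{1/2} \log n / \delta)$, contradicting $\tfrac{1}{C} \ll \tfrac{1}{D} \ll \delta$ and $n$ large. Hence $|Y_o^{\mathrm{bad}}| < N/100$, so $Y_o$ contributes fewer than $N/50$ bad vertices.

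Applying the same argument to $Y_e$ and adding up, at most $N/25 = 0.04 N$ vertices of $Y_0$ are bad, and taking $Y$ to be the good vertices gives $|Y| \ge 0.98 N - 0.04 N = 0.94 N$, as required. The main anticipated obstacle is the contradiction step above: one must ensure that the ordered blocking count is strictly dominated by the $\Omega(C^4 n^2)$ edge lower bound from Claim~\ref{claim:c4-s}, and this is precisely where the absence of thick four-cycles (which caps how many $y$'s a fixed vertex $v \in V(G)$ can participate in through an edge to a given $y'$) is decisive, with the scale separation $\tfrac{1}{C} \ll \tfrac{1}{D}$ providing the needed slack.
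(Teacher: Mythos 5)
Your proposal is correct and follows the same skeleton as the paper's proof: apply Lemma~\ref{lem:fan} to build a nice fan, split endpoints by parity of the fan path, produce the opposite-parity path by appending a single $\Gamma$-edge to a same-parity neighbour's fan path, bound the blocked extensions via the absence of thick four-cycles, and invoke Claim~\ref{claim:c4-s} to show few vertices are left out. The only place you diverge is in how the ``bad'' endpoints are counted: the paper first passes to a subgraph of $\Gamma[U_0]$ of minimum degree $\ge C^{2.5}n^{1/2}$ (losing at most $N/100$ vertices, justified by Claim~\ref{claim:c4-s}) and then divides the blocked-edge count by the minimum degree, whereas you split the bad endpoints into those with no same-parity neighbour (killed directly by Claim~\ref{claim:c4-s}) and those whose every same-parity neighbour is blocking, and then note that every edge inside the latter set is \emph{mutually} blocking, so Claim~\ref{claim:c4-s} applied to that set contradicts the blocked-pair upper bound. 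Both routes deliver the same quantitative conclusion; yours avoids the degree-deletion step and, since $\phi(y)\subseteq\phi(Q(y))$ already avoids $B$, also sidesteps the $|B|\le n^{1/4}$ term in the blocked-edge bound, giving a slightly cleaner $O(n^{7/4}\log n)$ rather than $O(n^2)$ estimate; the paper's version is marginally more mechanical but does not rely on the mutual-blocking symmetry. The constants work out in both cases under the stated hierarchy, so this is a valid and mildly streamlined rendering of the intended argument.
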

\begin{proof}
    By Lemma~\ref{lem:fan}, there is a nice fan $\calP$ rooted at $x_0$ of size at least $0.98N$ such that every $P \in \calP$ is a proper path of length at most $\frac{2 \log N}{\delta}$ and $\phi(P) \cap B = \emptyset$. Let $U \subseteq V(\Gamma)$ be the set of endpoints of the paths in $\calP$ and for each $y \in U,$ denote by $P(y)$ the path in $\calP$ from $x_0$ to $y$. Let $U_0$ and $U_1$ be the set of vertices $y \in U$ for which the length of the path $P(y) \in \calP$ is even and odd respectively. Then $|U| = |U_0| + |U_1| \ge 0.98 N$.
    
    Suppose $|U_0| \ge N/50$. Let $\Gamma'$ be a graph obtained from $\Gamma[U_0]$ by repeatedly removing vertices of degree at most $C^{2.5} n^{1/2}$. Note that $|V(\Gamma')| \ge |U_0| - N / 100.$ Indeed, otherwise, in $\Gamma$ there is a vertex set of size $N / 100$ spanning at most $N / 100 \cdot C^{2.5} n^{1/2} = C^{3.5} n^2 / 100 < 10^{-11} C^4 n^2$ edges (since $C$ is large enough) contradicting Claim~\ref{claim:c4-s}. Thus, in particular, $\Gamma'$ is non-empty and $\delta(\Gamma') \ge C^{2.5} n^{1/2}$.
    
    Note that for any edge $yz \in E(\Gamma')$ with $\phi(z) \cap (\phi(P(y)) \cup B) = \emptyset$, we can extend the path $P(y)$ from $x_0$ to $y$ using the edge $yz$ to obtain a proper path $P_o(z)$ in $\Gamma$ of odd length at most $\frac{2\log N}{\delta} + 1$ from $x_0$ to $z$ such that $\phi(P_o(z)) \cap B = \emptyset$.
 
    Let us show that such a path $P_o(z)$ of odd length can be obtained for all but at most $N / 100$ vertices $z \in V(\Gamma').$ To that end, we bound the number of edges $yz \in E(\Gamma')$ such that $\phi(z) \cap (\phi(P(y)) \cup B) \neq \emptyset$ as follows. 
    
    For each $y \in V(\Gamma')$ and $v \in \phi(P(y)) \cup B,$ there are at most $10 \sqrt{CD} n^{1/4}$ possible vertices $z \in V(\Gamma')$ such that $v \in \phi(z)$ and $yz \in E(\Gamma').$ Indeed, otherwise the vertex $u \in \phi(y)$ which is in the same part of $G$ as $v$ satisfies $d_G(u, v) > 10 \sqrt{CD} n^{1/4},$ so any such edge $yz \in E(\Gamma')$ would correspond to a thick four-cycle in $\Gamma_0$ and hence would not be in $\Gamma$, a contradiction. Since for any $y \in V(\Gamma')$, the path $P(y) \in \calP$ has length at most $\frac{2 \log N }{\delta}$ (so, $\phi(P(y)) \le \frac{4 \log N }{\delta}$) we conclude that the number of edges $yz \in E(\Gamma')$ such that $\phi(z) \cap (\phi(P(y)) \cup B) \neq \emptyset$ is at most $N \cdot ( \frac{4\log N}{\delta} + n^{1/4}) \cdot 10 \sqrt{CD} n^{1/4} < C^{1.6} n^2,$ where we used $\frac{1}{C} \ll \frac{1}{D}$.
    
    On the other hand, for a vertex $z \in V(\Gamma'),$ if we cannot find a path $P_o(z)$ of odd length from $x_0$ to $z$ as described above, then $\phi(z) \cap (\phi(P(y) \cup B) \neq \emptyset$ for all $y$ with $yz \in E(\Gamma').$ Hence, the number of such vertices $z$ is at most $C^{1.6} n^2 / \delta(\Gamma') \le n^{3/2}/C^{0.9} < N / 100,$ as required. 
    
    In summary, if $|U_0| \ge N/50$, we obtain a proper odd path $P_o(z)$ of length at most $\frac{2 \log N}{\delta}  + 1$ from $x_0$ to $z$ with $\phi(P_o(z)) \cap B = \emptyset$ for all but at most $N / 50$ vertices $z \in U_0.$ If $|U_0| < N/50$, then this statement is also vacuously true. An analogous argument yields an even proper path $P_e(z)$ of length at most $\frac{2 \log N }{\delta} + 1$ from $x_0$ to $z$ with $\phi(P_e(z)) \cap B = \emptyset$ for all but at most $N / 50$ vertices $z \in U_1.$
    
    Now, for each $z \in U_0$, let $P_e(z) \coloneqq P(z) \in \calP$ and for each $z \in U_1$, let $P_o(z) \coloneqq P(z) \in \calP$. Hence, the desired paths $P_e(z), P_o(z)$ exist for at least $|U| - 2 (N/50) \ge 0.98N - 2(N/50) = 0.94N$ vertices $z \in V(\Gamma)$, as required. This completes the proof of the lemma.
\end{proof}

In the next subsection, we combine Lemma~\ref{lem:even-odd}, Lemma~\ref{lem:fan} and ideas in \cite{letzter2023hypergraphs} to construct a proper odd cycle in $\Gamma$, giving us the required cycle with all diagonals in $G$.

\subsection{Constructing a proper odd cycle in $\Gamma$ and finding a cycle with all diagonals}
\label{sec:closingoddcycle}
By Claim~\ref{claim:cycle}, in order to find a cycle with all diagonals in $G$, it suffices to find a proper cycle of odd length in $\Gamma$. To that end, we apply Lemma~\ref{lem:fan} with $B = \emptyset$ to obtain for every $x \in V(\Gamma),$ a nice fan $\calP(x) \coloneqq \{ P(x,y) \}_{y \in Y(x)}$  rooted at $x$ of size at least $0.98 N$, where $Y(x) \subseteq V(\Gamma)$ and for every $y \in Y(x)$, $P(x, y)$ is the proper path in $\calP(x)$ starting at $x$ and ending at $y$.

Now, for every $y \in V(\Gamma),$ let $F(y)$ be the set of vertices in $V(G) \setminus \phi(y)$ appearing on at least $t$ of the paths among $\{ P(x, y) \}_{x \in V(\Gamma)}$. Since $t = C^{1.1} n^{5/4} \log n,$ and for every $x \in V(\Gamma)$, the path $P(x, y)$ has length at most $\frac{2 \log N}{\delta} \le \frac{5 \log n}{\delta}$, we have
    \[ |F(y)| \le N \cdot \frac{10 \log n}{\delta} \cdot \frac{1}{t} \leq Cn^{3/2} \cdot \frac{10 \log n}{\delta} \cdot \frac{1}{C^{1.1} n^{5/4} \log n} \le
    \frac{10}{\delta C^{0.1}} n^{1/4} < n^{1/4}, \]
    where we used $\frac{1}{C} \ll \delta.$

    Thus, applying Lemma~\ref{lem:even-odd} with $F(y)$ playing the role of $B$, we obtain for every $y \in V(\Gamma),$ a set $S(y) \subseteq V(\Gamma)$ of size $|S(y)| \ge 0.94N$ such that for every $x \in S(y),$ there is a proper path $P_o(y, x)$  of odd length and a proper path $P_e(y, x)$ of even length in $\Gamma$, both of which start at $y$ and end at $x$ and are of length at most $\frac{2 \log N }{\delta} + 1$ such that $(\phi(P_o(y, x)) \cup \phi(P_e(y,x))) \cap F(y) = \emptyset$.

    \noindent We claim that if there is a pair of vertices $x, y \in V(\Gamma)$ satisfying the properties $\mathrm{(i)}$ and $\mathrm{(ii)}$ below, then there is a proper cycle of odd length in $\Gamma$, as desired.
    \begin{itemize}
    
        \item[(i)] \label{proof:manyz} there is a set $Z \subseteq V(\Gamma)$ of size $|Z| \ge 0.7 N$ such that for every $z \in Z$, we have proper paths $P(x, z) \in \calP(x)$, $P(z, y) \in \calP(z)$ and $\phi(P(x,z)) \cap \phi(P(z, y)) = \phi(z)$, 
        \item[(ii)] \label{proof:backtox} $x \in S(y)$.
    \end{itemize}
     
    Let us first prove the claim above. Indeed, since $\mathrm{(ii)}$ holds, there are proper paths $P_o(y, x),$ $P_e(y, x)$ from $y$ to $x$ as defined above. Since the paths are proper, note that in particular $\phi(x) \cap \phi(y) = \emptyset$.  Let $T \coloneqq \phi(P_o(y,x)) \cup \phi(P_e(y,x)) \setminus (\phi(x) \cup \phi(y))$ and note that $|T| \le 4 \left(2\frac{\log N}{\delta} + 1 \right) < 20 \frac{\log n}{\delta}.$ 
    Since $\calP(x)$ is a nice fan and $T \cap \phi(x) = \emptyset$, every vertex $v \in T$ appears on at most $t$ of the paths $\{P(x,z)\}_{z \in Z}$. Moreover, since 
    $T \cap (F(y) \cup \phi(y))  = \emptyset$, every vertex $v \in T$ appears on at most $t$ of the paths $\{P(z,y)\}_{z \in Z}$. Then it follows that there are at least $|Z| - 2|T| \cdot t \ge 0.7N - 2 \left(20 \frac{\log n}{\delta}\right)
    C^{1.1} n^{5/4} \log n \ge 0.6N$ vertices $z \in Z$ such that $(\phi(P(x,z)) \cup \phi(P(z, y))) \cap T = \emptyset$. Fix one such vertex $z \in Z.$ Then since $\phi(P(x,z)) \cap \phi(P(z, y)) = \phi(z)$ by $\mathrm{(i)}$, either $P(x, z)P(z, y)P_e(y,x)$ or $P(x, z)P(z, y)P_o(y,x)$ is a proper odd cycle in $\Gamma,$ proving the claim.

    Hence, to complete the proof, it suffices to find a pair of vertices $x, y \in V(\Gamma)$ satisfying the properties $\mathrm{(i)}$ and $\mathrm{(ii)}$.

To that end, let us consider the ordered triples $(x, z, y) \in V(\Gamma) \times V(\Gamma) \times V(\Gamma)$ such that $P(x, z) \in \calP(x)$,  $P(z, y) \in \calP(z)$ and $\phi(P(x,z)) \cap \phi(P(z, y)) = \phi(z)$, and call such triples $(x, z, y)$ \emph{admissible}. Let us give a lower bound on the number of admissible triples $(x, z, y)$. For a given $x \in V(\Gamma)$, the number of choices of $z \in V(\Gamma)$ such that $P(x, z) \in \calP(x)$ is at least $0.98 N$. Fixing a choice of $x, z \in V(\Gamma)$ with $P(x, z) \in \calP(x)$, every vertex $v \in \phi(P(x, z)) \setminus \phi(z)$ appears on at most $t$ of the paths in $\calP(z)$ because $\calP(z)$ is a nice fan. Hence, the number of choices for $y \in V(\Gamma)$ such that $P(z, y) \in \calP(z)$ and $\phi(P(z, y)) \cap (\phi(P(x,z)) \setminus \phi(z)) = \emptyset$ is at least $|\calP(z)| - |\phi(P(x, z))| t \ge  0.98 N - \frac{4 \log N}{\delta} \cdot C^{1.1} n^{5/4} \log n \ge 0.97 N$. Hence the number of admissible triples $(x, z, y)$ is at least $N \cdot (0.98 N) \cdot (0.97 N)$. 
If an ordered pair $(x, y) \in V(\Gamma) \times V(\Gamma)$ does not satisfy $\mathrm{(i)}$, then there are more than $N- 0.7N$ vertices $z$ such that  $(x, z, y)$ is not an admissible tuple. Thus the number of ordered pairs $(x, y) \in V(\Gamma) \times V(\Gamma)$ which do not satisfy $\mathrm{(i)}$ is at most $\frac{(1 - 0.98 \cdot 0.97)N^3}{(1-0.7)N} < \frac{N^2}{3}$. The number of ordered pairs $(x, y) \in V(\Gamma) \times V(\Gamma)$ such that $x \not\in S(y)$ (i.e., which do not satisfy $\mathrm{(ii)}$) is at most $N(N - 0.94N) = 0.06N^2.$ In total, the number of ordered pairs $(x, y) \in V(\Gamma) \times V(\Gamma)$ not satisfying either $\mathrm{(i)}$ or $\mathrm{(ii)}$ is at most $\frac{N^2}{3} + 0.06N^2 < N^2,$ so there is a  pair $(x, y)$ satisfying $\mathrm{(i)}$ or $\mathrm{(ii)}$, as required. This completes the proof of Theorem~\ref{thm:main}. 

\section{Concluding remarks} \label{sec:concluding}
In this paper we proved that any graph with at least $C n^{3/2}$ edges contains a cycle with all diagonals (where $C$ is a large enough constant). A key lemma in our proof is Lemma~\ref{lem:almost_spanning_expander}, which states that, roughly speaking, if $G$ is an expander, then its $C_4$-graph contains an almost-spanning robust expander. Here the $C_4$-graph of $G$ is the graph defined on the edge-set of $G$ in which two edges are adjacent if they are opposite edges of a four-cycle in $G$. We believe that similar arguments applied to different auxiliary graphs could be useful for other problems.

Let us mention a couple of problems related to our result. As noted in the introduction, if $\ell = 3$, then $C^{\textrm{dia}}_{2\ell}$ is isomorphic to the complete bipartite graph $K_{3,3}$, so $\ex(n, C^{\textrm{dia}}_{2\ell}) = \ex(n, K_{3,3}) = \Omega(n^{5/3})$ using Brown's well-known construction~\cite{brown1966graphs}. On the other hand, our proof shows that every $n$-vertex graph with $C n^{3/2}$ edges contains a copy of $C^{\textrm{dia}}_{2\ell}$ for some $\ell = O(\log n)$. This suggests the following question.

\begin{problem}
    Is it true that if $\ell \in \mathbb{N}$ is large enough, then $\ex(n, C^{\textrm{dia}}_{2\ell}) = O(n^{3/2})$?
\end{problem}

Next, we propose a possible generalization of our result. For a natural number $k$, consider an even cycle with every $k$--th diagonal and note that the graphs $C^{\textrm{dia}}_{2\ell}$ correspond to taking $k=1$. It is easy to see that if $k$ is even, such a graph cannot be bipartite. For odd $k,$ much like in Figure~\ref{fig:c10}, this graph can be viewed as an ``odd cycle of $2(k+1)$-cycles" where two consecutive cycles share exactly one edge. Hence, can we ask the following question. For any odd $k \ge 3,$ is there is a constant $C = C(k)$ such that every $n$-vertex graph with at least $C n^{1 + 1/(k+1)}$ edges contains a cycle with every $k$--\text{th} diagonal? We believe that the methods developed in this paper might be useful in resolving this problem. More precisely, one would require an analogue of Claim~\ref{cl:large-sets-robustly-expand} for the auxiliary graph $\Gamma_0$ defined on the edge-set of $G$ in which two edges of $G$ are adjacent in the auxiliary graph if they are opposite edges in a $2(k+1)$-cycle. Additionally, one would need a way to upper bound the number of $2(k+1)$-cycles containing a given edge and vertex. This would be analogous to Claim~\ref{claim:thick}.

\vspace{0.3cm}
\noindent
{\bf Acknowledgements.}
We are extremely grateful to Jun Gao, Oliver Janzer, Tao Jiang, Hong Liu and Zixiang Xu for valuable discussions.

\end{document}